\documentclass[11pt]{article}

\usepackage[a4paper,margin=1in]{geometry}
\usepackage{amsmath,amssymb,amsthm,mathtools}
\usepackage{microtype}
\usepackage{xcolor}
\usepackage[colorlinks=true,linkcolor=blue,citecolor=blue,urlcolor=blue]{hyperref}
\usepackage{parskip}
\theoremstyle{plain}
\newtheorem{theorem}{Theorem}[section]
\newtheorem{proposition}[theorem]{Proposition}
\newtheorem{lemma}[theorem]{Lemma}

\newtheorem{conjecture}[theorem]{Conjecture}
\theoremstyle{remark}

\DeclareMathOperator{\FS}{FS}
\newcommand{\Prob}{\mathbb P}

\newcommand{\Z}{\mathbb Z}
\newcommand{\eps}{\varepsilon}

\title{Arithmetic Progressions in a Random Binary Subset-Sum Set}
\author{Norbert Hegyv\'ari\thanks{E\"otv\"os University and associated member of Alfr\'{e}d R\'{e}nyi Institute, Hungary. Email: hegyvari@renyi.hu} \and Thang Pham \thanks{Institute of Mathematics and Interdisciplinary Sciences at Xidian University, China. \newline
\hspace*{0.5cm} Email: phamanhthang.vnu@gmail.com} \and Boqing Xue\thanks{Institute of Mathematical Sciences, ShanghaiTech University. Email: xuebq@shanghaitech.edu.cn}}\date{}

\begin{document}

\maketitle

\begin{abstract}
Let \(u=(u_n)_{n\ge0}\) be a binary sequence, and define
\[
X_0=1,\qquad X_{n+1}=2X_n+u_n \qquad(n\ge0).
\]
Let \(A_u\) be the set of all nonempty finite subset sums of the sequence \((X_n)\), and let \(L_u(N)\) denote the maximum length of an arithmetic progression contained in \(A_u\cap[1,N]\). We prove that there are absolute constants \(c>0\) and $N_0\geq 1$ such that, for every binary sequence \(u\),
\[
L_u(N)\ge \exp\!\left(c\sqrt{\frac{\log N}{\log\log N}}\right)
\]
for all \(N\geq N_0\). Moreover, if the random variables \(u_n\) are independent and uniformly distributed on \(\{0,1\}\), then, almost surely,
\[
L_u(N)\ll_u N^{2/3}\exp\!\left(C\sqrt{\log N\log\log N}\right),
\]
where \(C>0\) is an absolute constant. Furthermore, every eventually periodic binary sequence satisfies $L_u(N)\gg_u N^{1/2}$.
\end{abstract}

\section{Introduction}\label{sec_intro}

The arithmetic structure of subset sums of sparse sequences is a classical subject in additive number theory. One asks when these sums contain all sufficiently large integers, an infinite arithmetic progression, or long finite progressions. The first property is called completeness. Following work of Graham \cite{Graham1964}, Erd\H{o}s and Graham \cite[p.~57]{ErdosGraham1980} asked for which parameters the sequence \((\lfloor\tau\lambda^n\rfloor)_{n\ge1}\) is complete. We study finite arithmetic progressions at the dyadic boundary \(\lambda=2\), where the number of subsets of the first \(n\) generators and the size of their largest possible sum are both of order \(2^n\).

Let \(u=(u_n)_{n\ge0}\) be an infinite binary sequence, and define
\begin{equation}\label{eq:recurrence}
X_0=1,\qquad X_{n+1}=2X_n+u_n\qquad(n\ge0).
\end{equation}
For a set \(S\) of positive integers, let \(\FS(S)\) denote the set of its nonempty finite subset sums. Put
\[
A=A_u:=\FS(\{X_n:n\ge0\}),\qquad A_0:=A\cup\{0\},
\]
and, for an integer \(N\ge1\), define
\[
L_u(N):=\max\{|P|:P\subseteq A_u\cap[1,N] \text{ is an arithmetic progression}\}.
\]
Here a progression has the form \(a,a+d,\ldots,a+(\ell-1)d\), with \(d\ge1\). We write \(L(N)\) when the sequence is clear, and use
\(\ll_u\) when the implied constant may depend on \(u\).

The generators are sparse, but their subset sums have positive density. Indeed, let
\begin{equation}\label{eq_alpha_u}
\alpha_u:=\lim_{n\to\infty}\frac{X_n}{2^n}
=1+\sum_{j=0}^{\infty}\frac{u_j}{2^{j+1}}\in[1,2].
\end{equation}
We prove in Section~\ref{sec:lower-bound} that
\[
|\{n:X_n\le N\}|=\log_2N+O(1),
\qquad
|A_u\cap[1,N]|=\frac{N}{\alpha_u}+O(\log N).
\]
This is essentially the smallest possible number of generators compatible with positive lower density of the subset-sum set: since \(k\) generators have at most \(2^k\) subset sums, such density requires at least \(\log_2N-O(1)\) generators below \(N\). Thus our family gives a natural setting in which to ask what additional arithmetic structure is forced by the subset-sum construction.

The family includes \(X_n=\lfloor\alpha2^n\rfloor\) for every \(\alpha\in[1,2)\). Choosing \(\alpha\) uniformly on this interval gives
the same distribution as choosing the bits \(u_n\) independently and uniformly. Szemer\'{e}di's theorem \cite{Szemeredi1975} and the density formula imply \(L_u(N)\to\infty\) for every \(u\). On the other hand, Hegyv\'{a}ri \cite[Section~2]{Hegyvari1994} showed that, for
\(1<\alpha<2\), the subset sums of \((\lfloor\alpha2^n\rfloor)_{n\ge0}\) contain an infinite arithmetic progression if and only if \(\alpha\) is a dyadic rational. Hence the random set almost surely has arbitrarily long finite progressions but no infinite progression. We seek quantitative bounds for their length, both uniformly over all binary sequences and almost surely in the random model.

\begin{theorem}\label{thm:main}
There are absolute constants $c>0$ and $N_0\ge1$ such that, for every infinite binary sequence $u=(u_n)$, one has
\[
L_u(N)\ge\exp\!\left(c\sqrt{\frac{\log N}{\log\log N}}\right), \qquad (\forall N\ge N_0).
\]
\end{theorem}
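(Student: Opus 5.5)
The plan is to build the progression explicitly, by a mixed-radix (base-$b$ digit) construction, instead of extracting it from the density. The model case is a run of zeros $u_n=\dots=u_{n+k-1}=0$. On such a run the generators are $X_n,2X_n,\dots,2^{k}X_n$, and their subset sums form the progression $X_n,2X_n,\dots,(2^{k+1}-1)X_n$. A typical $u$ has runs only of length $O(\log\log N)$, so this alone gives roughly $\log N$. The idea is to replace single generators by \emph{bundles} of generators whose subset sums reproduce exactly the digit multiples of a common difference.

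Concretely, I aim to find an integer $D\ge1$, a base $b\ge2$, a number of levels $k$, and pairwise disjoint finite sets of indices $S_1,\dots,S_k$ with the following property. For each $i$ and each digit $j\in\{0,1,\dots,b-1\}$ there is a subset $T_{i,j}\subseteq S_i$ with
\[
\sum_{n\in T_{i,j}}X_n = a_i + j\,b^{i-1}D ,
\]
where the offset $a_i$ depends only on $i$. Summing over $i$ then shows that $A_0$ contains the progression $\sum_i a_i + tD$ for $0\le t<b^k$. Its length is $b^k$, and it lies in $[1,N]$ once all indices are at most $\log_2N-O(1)$; here I use $|\{n:X_n\le N\}|=\log_2N+O(1)$ and the fact that the total sum of the generators below $2^{m}$ is $O(2^m)$.

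To realise one level I would use a block of $m$ consecutive indices $n,\dots,n+m-1$. Iterating \eqref{eq:recurrence} gives $X_{n+r}=2^rX_n+c_r$ with $0\le c_r<2^r$, where $c_r$ is determined by $u_n,\dots,u_{n+r-1}$. Hence a subset with bit pattern $\varepsilon\in\{0,1\}^m$ has sum $s(\varepsilon)X_n+e(\varepsilon)$, where $s(\varepsilon)=\sum\varepsilon_r2^r$ and $0\le e(\varepsilon)<2^m m$. Among the $2^m$ patterns I will look for $b$ of them whose sums are exactly in arithmetic progression. This is a pigeonhole/combinatorial step: the values $s X_n+e$ lie in $O(m)$ windows $\{sX_n+[0,2^m m)\}$ per unit of $s$, and I want $b$ values equally spaced with common gap $g$. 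I would take $b\approx 2^{\sqrt{m}}$ and allow $s$ to range over $b$ residue-spread choices, so that the error terms $e$ can be matched mod the gap. Consistency across levels then requires the gap of level $i$ to equal $b^{i-1}D$. I would choose the blocks at heights $n_i\approx n_1+(i-1)\log_2 b$, so that $X_{n_i}\approx b^{i-1}X_{n_1}$ up to the $\alpha_u$-factor. The exact equality is then forced by absorbing the discrepancy into extra generators taken from a buffer of $O(\log b+\log m)$ further indices per level; the buffer's subset sums cover every residue in a short range.

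The main obstacle is this exact-matching step: making all $k$ levels have gaps precisely $b^{i-1}D$ despite the uncontrolled corrections $c_r$, which depend on arbitrary $u$. This is where the proof must be uniform over all binary sequences. My first attempt would be to fix $D$ as a subset sum of a base block and prove, by pigeonhole over the $\le 2^{O(m)}$ possible correction profiles, that each subsequent level can be tuned using $O(m\log m)$ indices. The cost per level is then about $m\log m$ indices, for a gain of a factor $b=2^{\Theta(\sqrt{m})}$ or so. Balancing $k\cdot m\log m\asymp\log N$ against $\log L=k\log b$ should give $\log L\asymp\sqrt{\log N/\log\log N}$ once the parameters are optimised. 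I expect the $\log\log N$ loss to come exactly from the $\log m$ buffer overhead. If the matching cannot be made exact, the fallback is to allow an offset and apply the same scheme to the shifted progression.
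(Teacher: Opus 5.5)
\textbf{There is a genuine gap: the exact-matching step is missing.}

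Your reduction is correct as far as it goes. Disjoint index sets $S_1,\dots,S_k$ with subsets $T_{i,j}\subseteq S_i$ of sum exactly $a_i+jb^{i-1}D$ do give a progression of length $b^k$. But the whole content of the theorem lies in producing those exact sums for an arbitrary $u$, and that is the step you leave open. It is open both inside a level (finding $b$ patterns whose sums $s(\varepsilon)X_n+e(\varepsilon)$ are exactly equally spaced) and across levels (forcing the gap to be exactly $b^{i-1}D$).

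This is not routine bookkeeping, as two checks show:
\begin{itemize}
\item A single level realising $2^{\Theta(\sqrt m)}$ exact terms from $O(m\log m)$ indices is already Theorem~\ref{thm:main}. With the $m+O(\log b+\log m)$ indices you first describe, it would give $\exp(c\sqrt{\log N})$, which Section~\ref{sec:conjecture} lists as open.
\item If the per-level cost were independent of $k$, your own count $\log_2 L\asymp\log_2 N/(\sqrt m\log m)$ would be optimised at bounded $m$. That would give $L_u(N)\ge N^{c}$ for every $u$, far beyond the theorem and refuting Conjecture~\ref{conj:upper-sqrt-log}.
\end{itemize}

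The mechanisms you sketch do not supply the exactness:
\begin{itemize}
\item A buffer near height $n_i$ cannot make small corrections. By Lemma~\ref{lem:superincreasing}, every nonzero combination $\sum_{n\ge n'}\sigma_nX_n$ with $\sigma_n\in\{-1,0,1\}$ exceeds $\sum_{i<n'}X_i\ge 2^{n'}-1$ in absolute value. The discrepancies coming from the $c_r$ are only of size up to about $m2^m$.
\item A buffer of $O(\log b+\log m)$ generators has only $(bm)^{O(1)}$ subset sums. That is far too few to absorb arbitrary discrepancies in a range of that size.
\item The pigeonhole over ``correction profiles'' is never specified.
\end{itemize}
Your attribution of the $\log\log N$ loss to buffer overhead is therefore unsupported.

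\textbf{The missing idea: cancel the errors rather than correct them.} Use a single sign pattern for all levels.
\begin{itemize}
\item Take $t=\lceil 4r\log_2 r\rceil$ disjoint bit blocks $W(ir,r)$, $0\le i<t$. Their $2^t$ subset sums are vectors in $\{0,\dots,t\}^r$, and $(t+1)^r<2^t$. So two subsets collide, giving $\sigma\in\{-1,0,1\}^t\setminus\{0\}$ with $\sum_i\sigma_iu_{ir+j}=0$ for every $0\le j<r$.
\item Then $D_j:=\sum_i\sigma_iX_{ir+j}$ satisfies $D_{j+1}=\sum_i\sigma_i(2X_{ir+j}+u_{ir+j})=2D_j$ exactly.
\item $D_0\ne0$ by uniqueness of representations, and $D_0>0$ after flipping signs.
\item For each $j<r$, switch independently from $Q_j=\{ir+j:\sigma_i=-1\}$ to $P_j=\{ir+j:\sigma_i=1\}$. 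Starting from a fixed base sum $S$, this yields $S+mD_0$ for $0\le m<2^r$, with all indices at most $rt=O(r^2\log r)$.
\end{itemize}
In your language this is $b=2$ with $k=r$ levels. Level $j$ is the transversal $\{ir+j\}_i$ across the blocks, not a run of consecutive indices. Consistency between levels is then automatic. The $\log\log N$ loss comes from the factor $\log_2(t+1)\approx\log_2 r$ in the pigeonhole count, not from any buffer.
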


\begin{theorem}\label{thm:main2}
There is an absolute constant $C>0$ such that, for independent fair bits $u=(u_n)$, almost surely there is a finite constant $C_u>0$ satisfying
\[
L_u(N)\le C_u N^{2/3}
\exp\!\left(C\sqrt{\log N\log\log N}\right)
\qquad(\forall N\ge3).
\]
\end{theorem}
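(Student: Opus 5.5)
The plan is a first-moment count combined with Borel--Cantelli over dyadic scales $N=2^n$. For each scale we bound the expected number of arithmetic progressions of length $L$ in $A_u\cap[1,N]$, where $L=N^{2/3}\exp(C\sqrt{\log N\log\log N})$. We then show this expectation is summable in $n$. The numerology I am aiming for is as follows. There are at most $N^2/L$ pairs $(a,d)$. If a fixed progression of length $L$ lies in $A_u$ with probability at most $\exp(-cL^3N^{-2}/E(N))$, with a subexponential loss $E(N)=\exp(O(\sqrt{\log N\log\log N}))$, then the expected count is $o(n^{-2})$. This choice of $L$ is exactly where the exponent $2/3$ comes from. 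Monotonicity of $L_u(N)$ in $N$ then handles non-dyadic $N$, and Borel--Cantelli gives the almost sure statement with a finite $C_u$.

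\textbf{Structural step.} First I would record the self-similar description of $A_0$. Put $B_m:=\FS(\{X_0,\dots,X_{m-1}\})\cup\{0\}$. Then $\sum_{i<m}X_i\le X_m-1$, and the recursion \eqref{eq:recurrence} gives
\[
B_{m+1}=B_m\cup(X_m+B_m),
\]
with the two copies separated by a gap whose size depends on $u_0,\dots,u_{m-1}$. Equivalently, I would write $X_i=\alpha_u2^i-e_i$ with $e_i=2^i\sum_{j\ge i}u_j2^{-j-1}\in[0,1]$. Then every element of $A_0\cap[0,N]$ has the form $\alpha_u s-E(s)$, where $s$ is a binary integer and $E(s)=\sum_{i\in s}e_i\in[0,\log_2N+O(1)]$. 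So membership of $x$ in $A$ is decided by the binary expansion of (roughly) $x/\alpha_u$ together with the bits $u_j$ in the window around the scale of each digit. The key consequence is that fresh randomness enters at every scale: the bits $u_j$ with $2^j\approx M$ fix the alignment of the copies of $B_j$ at scale $M$.

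\textbf{Probabilistic step (main obstacle).} Fix $(a,d)$ with $d\le N/L$. I would choose an intermediate scale $M$ so that each block of $A_0$ of length about $M$ contains about $M/d$ terms of the progression. There are about $Ld/M$ such blocks, and distinct blocks are translates $X_{j_1}+\dots+X_{j_r}+B_m$ whose offsets are governed by different high bits. Within one block, a progression of length $\gg M/d$ must be compatible with the internal gap pattern of $B_m$, and this pattern can contain a long progression only in an exceptional set of $u$ of small probability. Across blocks, the offset of each new block is shifted by a random amount determined by bits not yet exposed. I would therefore expose the bits $u_j$ from the top scale downward and try to show that, conditionally on the past, each new block independently fails with probability at least $c>0$. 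This would give a bound like $(1-c)^{\#\text{blocks}}$.

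Optimizing $M$ against $d$ and $L$ should produce the exponent $L^3/N^2$ up to losses. These losses come from two sources: the arithmetic of $d$ relative to powers of $2$ (for instance when $d$ has a large $2$-adic part, or lies close to a multiple of some $X_m$), and the $O(\log N)$ drift in $E(s)$. I expect to handle these by splitting $d$ into $O(\log N)$ dyadic ranges and by using a smoothing argument over the $\log\log N$-sized windows of bits. That is where the factor $\exp(C\sqrt{\log N\log\log N})$ should appear, by balancing a loss of roughly $2^{k}$ against $N^{1/k}$ with $k\approx\sqrt{\log N/\log\log N}$.

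The hardest part is the conditional-independence claim: showing that each block offset genuinely destroys the progression with probability bounded below, uniformly in $d$. My first attempt would be a direct case analysis of how a single bit flip $u_j$ shifts the block $X_j+B_j$ by one relative to the progression, which should break the progression unless $d$ satisfies a rare divisibility condition. I would then treat those rare $d$ separately by a cruder count.
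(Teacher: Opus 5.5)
There is a genuine gap, and it breaks the first-moment strategy itself, not only the step you flag as hardest. The set $A_u\cap[1,N]$ is a function of just $n=\lfloor\log_2N\rfloor+O(1)$ bits, so no event about it can have positive probability smaller than about $2^{-n}\approx1/N$. Concretely, with probability $2^{-\lfloor\log_2N\rfloor}\ge1/N$ one has $u_0=\cdots=u_{\lfloor\log_2N\rfloor-1}=0$. In that case $X_i=2^i$ for $i\le\lfloor\log_2N\rfloor$, and $A_u\cap[1,N]=[1,N]$. Hence every fixed progression in $[1,N]$ lies in $A_u$ with probability at least $1/N$. The bound $\exp(-cL^3N^{-2}/E(N))$ you aim for is of size $\exp(-e^{c'\sqrt{\log N\log\log N}})$, far below $1/N$, so it is false. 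The expected number of progressions of length $L$ is $\gg(N^2/L)\cdot N^{-1}=N^{1/3-o(1)}\to\infty$, so no union bound over pairs $(a,d)$ can succeed. The root error is the claim that the $\approx Ld/M$ blocks carry fresh randomness and fail independently with probability at least $c$. In fact the offsets of all translates $X_{j_1}+\dots+X_{j_r}+B_m$ are functions of the same $O(\log N)$ bits. A product bound $(1-c)^{\#\text{blocks}}$ is therefore unavailable once the number of blocks exceeds $O(\log N)$.

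What is needed is a mechanism that is uniform in the starting point $a$, so that the union is taken over $d$ only. The paper does this as follows.

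\begin{itemize}
\item For $2^k\mid m$, the integers in $[B(m)-U_k,B(m)-1]$ are missing from $A$. For $m=(s2^t+c)2^k$, these gaps sit at $B(s2^{k+t})+\alpha2^kc-E(c)$, with $0\le E(c)\le T_{k,t}$.
\item Suppose $U_k-T_{k,t}\ge2$ and the points $c\alpha2^k \bmod d$ $(1\le c<2^t)$ have covering radius at most $1/4$. Then these gaps meet every residue class mod $d$ in every window of length $O(2^{k+t})$, whatever $a$ is.
\item By Dirichlet, covering fails at level $k$ only if $\alpha2^k/d$ is within $1/(Rq)$ of a fraction with $q\le4d$. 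Failure at $K+1$ consecutive levels forces these fractions to be related by exact doubling. This confines $\alpha2^{k_0}/d$ to a set of measure $\ll d\log d/(R2^K)\ll1/(d\log^3d)$, which is summable over $d$ alone.
\item This gives $\ell\ll d^2e^{O(\sqrt{\log d\log\log d})}+\log N$ for large $d$. The exponent $2/3$ comes from balancing this against the trivial bound $\ell\le N/d+1$.
\end{itemize}

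The factor $\exp(C\sqrt{\log N\log\log N})$ arises because $k$ must exceed $t\approx\log_2d$ by $\gg\sqrt{t\log t}$. Only then do Hoeffding and Borel--Cantelli guarantee $U_k-T_{k,t}\ge2$ almost surely. It does not come from the balancing you describe.
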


Theorem~\ref{thm:main} gives progression lengths exceeding every fixed power of \(\log N\), for every choice of the digits. In comparison, an independent Bernoulli\((1/2)\) subset of \([1,N]\) has longest progression of order \(\log N\), with high probability \cite{BenjaminiYadinZeitouni2007}. The lower bound therefore reflects structure beyond positive density. In our random model, membership in \(A_u\) is highly dependent: the entire set \(A_u\cap[1,N]\) is determined by only \(\log_2N+O(1)\) bits. Theorem~\ref{thm:main2} gives an upper bound with a power saving for the length of its longest arithmetic progression. Such an upper bound cannot hold uniformly, since \(u\equiv0\) gives \(A_u=\mathbb Z_{\ge1}\).

These results concern a range in which general theorems on progressions in subset sums give much weaker information. For a finite set \(S\subseteq[1,M]\), Erd\H{o}s and S\'ark\"ozy \cite{ErdosSarkozy1992} proved that \(\FS(S)\cup\{0\}\) contains a progression of length \(\gg |S|/(\log M)^2\), and Schoen \cite{Schoen2011} improved this to \(\gg |S|/\log M\). A maximal initial block of our generators with total sum at most \(N\) has order \(\log N\) terms and largest term of order \(N\), so these estimates give only a bounded guarantee. The theorem of Szemer\'{e}di and Vu \cite{SzemerediVu2006Annals}, which gives a progression of length \(M\) when \(|S|\ge C\sqrt M\) for a sufficiently large absolute constant \(C\), requires far more generators. Related extremal questions concerning generators with progression-free subset sums were posed in \cite{ErdosSarkozy1992}. See also \cite{Korsky2026}.

The bounds leave a substantial gap, even for independent fair bits. We propose the following upper bound.
\begin{conjecture}\label{conj:upper-sqrt-log}
There is an absolute constant $C>0$ such that, for independent fair bits $(u_n)$, almost surely
\[
L_u(N)\le \exp\!\bigl(C\sqrt{\log N}\bigr)
\]
for all sufficiently large $N$.
\end{conjecture}
The distributional assumption matters: every eventually periodic sequence satisfies \(L_u(N)\gg_u N^{1/2}\). Section~\ref{sec:conjecture} proves this assertion and discusses further exceptional sequences.

Throughout the paper, \(\log\) denotes the natural logarithm, while \(\log_2\) denotes the logarithm to base \(2\).

\medskip
\noindent\textbf{Outline of the proofs.}
For the lower bound, a pigeonhole argument finds two disjoint collections of digit blocks with equal coordinatewise sums. These equalities cancel the errors in the recurrence, so the corresponding differences of generator sums double exactly. Uniqueness of subset-sum representations ensures that the first difference is nonzero. Independent switches between the two collections then give increments
\(d,2d,\ldots,2^{r-1}d\), producing a progression of length \(2^r\). The argument uses \(O(r^2\log r)\) initial digits, placing the progression below \(\exp(O(r^2\log r))\) and giving the stated lower bound.

For the upper bound, comparing the binary expansions of consecutive integers gives an exact description of the intervals missing from \(A_u\). The difficulty is to show that some of these intervals meet the residue class occupied by a progression of arbitrary common difference. Their positions are controlled by rotations whose parameters double at successive scales. Failure of the required covering gives rational approximations with denominators controlled by the common difference. If failure persists across consecutive scales, these approximations must themselves be related by doubling, forcing the initial parameter into a very short interval. This yields summable exceptional probabilities. Together with
concentration estimates for the digit sums, it gives an almost-sure bound for each common difference. Combining these bounds with the elementary bound imposed by the ambient interval yields the exponent \(2/3\).

\section{The deterministic lower bound}
\label{sec:lower-bound}

We begin with some elementary consequences of the recurrence. Put
\[
U_0:=0,\qquad    U_n:=\sum_{j=0}^{n-1}u_j\quad (n\geq 1).
\]

The first lemma shows a superincreasing property of the sequence $(X_n)$.

\begin{lemma}\label{lem:superincreasing}
For every $n\ge0$,
\begin{equation}\label{eq:superincreasing-identity}
    X_n-\sum_{i=0}^{n-1}X_i=1+U_n.
\end{equation}
Consequently,
\[
    X_n>\sum_{i=0}^{n-1}X_i,
\]
and all finite subset sums of the sequence $(X_n)$ have unique representations.
\end{lemma}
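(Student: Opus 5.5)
The plan is to prove the identity \eqref{eq:superincreasing-identity} by induction on $n$ and then read off the two consequences.

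\textbf{Base case and induction.} For $n=0$ the sum over $i$ is empty, so the left side is $X_0=1$. The right side is $1+U_0=1$, so the identity holds.

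Now suppose the identity holds for some $n\ge0$, and write $S_n:=\sum_{i=0}^{n-1}X_i$. Then $S_{n+1}=S_n+X_n$. The recurrence \eqref{eq:recurrence} gives
\[
X_{n+1}-S_{n+1}=2X_n+u_n-S_n-X_n=(X_n-S_n)+u_n .
\]
By the induction hypothesis this equals $1+U_n+u_n=1+U_{n+1}$, which closes the induction.

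\textbf{Superincreasing property.} Since every $u_j\ge0$, we have $U_n\ge0$. The identity then gives $X_n-S_n\ge1$, that is, $X_n>\sum_{i<n}X_i$.

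\textbf{Uniqueness of representations.} Suppose two distinct finite index sets $I\neq J$ have equal sums $\sum_{i\in I}X_i=\sum_{j\in J}X_j$. Removing the common indices $I\cap J$ from both sides preserves equality. This leaves disjoint sets $I'$ and $J'$ with equal sums, not both empty.

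If one of them were empty, the other would be a nonempty set of positive integers with sum $0$, which is impossible. So both are nonempty. Let $m$ be the largest index in $I'\cup J'$, and say $m\in I'$. Since the sets are disjoint, $m\notin J'$, so every index of $J'$ is less than $m$. Hence
\[
\sum_{J'}X_j\le S_m<X_m\le\sum_{I'}X_i,
\]
a contradiction.

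None of these steps is a real obstacle. The only point needing care is to cancel the common indices first, so that the maximal index lies in exactly one of the two sets.
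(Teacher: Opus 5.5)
Your proposal is correct and follows the paper's proof: the same induction for the identity via the recurrence, positivity of $U_n$ for the superincreasing inequality, and the largest-index argument after cancelling common terms for uniqueness. The one addition is that you write out the inequality chain $\sum_{J'}X_j\le\sum_{i<m}X_i<X_m\le\sum_{I'}X_i$ that the paper leaves implicit.
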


\begin{proof}
For $n=0$, identity \eqref{eq:superincreasing-identity} holds automatically.  If it holds for $n$, then \eqref{eq:recurrence} leads to
\[
\begin{aligned}
 X_{n+1}-\sum_{i=0}^{n}X_i
 =X_n+u_n-\sum_{i=0}^{n-1}X_i=1+U_n+u_n
 =1+U_{n+1}.
\end{aligned}
\]
This proves the identity by induction.  The strict inequality follows because
$U_n\ge0$.

Suppose that two different finite subsets had the same sum. Let $n$ be the largest index on which they disagree. After cancelling the common terms, one side contains \(X_n\), while the other can use only earlier terms. Therefore, equality is impossible.
\end{proof}

We record further elementary consequences of the recurrence.  If \(m=\sum_{i\geq0}\eps_i(m)2^i\), with \(\eps_i(m)\in\{0,1\}\), define
\begin{equation}\label{eq:B-definition}
    B(m):=\sum_{i\geq0}\eps_i(m)X_i.
\end{equation}
Lemma~\ref{lem:superincreasing} shows that \(B:\Z_{\geq0}\to A_0\) is a strictly increasing bijection.

The recurrence gives \(2^n\leq X_n<2^{n+1}\), and hence
\[
    |\{n:X_n\leq N\}|=\log_2N+O(1).
\]
Moreover, unwinding the recurrence gives
\[
    0\leq\alpha_u2^n-X_n\leq1.
\]
Therefore, for \(m\geq1\),
\[
 0\leq\alpha_um-B(m)
 =\sum_{i\geq0}\eps_i(m)(\alpha_u2^i-X_i)
 \leq1+\lfloor\log_2m\rfloor.
\]
Thus \(B(m)=\alpha_um+O(\log m)\), and the monotonicity of \(B\)
implies
\[
    |A_u\cap[1,N]|=\frac{N}{\alpha_u}+O(\log N).
\]

The floor-sequence correspondence can also be read from the error term:
\[
\alpha_u2^n-X_n=\sum_{j=n}^{\infty}\frac{u_j}{2^{j-n+1}}.
\]
If \(u\) contains infinitely many zeros, this quantity is less than one for every \(n\), so \(X_n=\lfloor\alpha_u2^n\rfloor\). Conversely, if \(u\) is eventually one, the error equals one for all sufficiently large \(n\). Any parameter \(\alpha\) representing the same sequence as \(X_n=\lfloor\alpha2^n\rfloor\) would have to equal \(\alpha_u\), so no such parameter exists. Thus the recurrence family contains precisely a countable collection of additional sequences beyond the floor sequences with \(\alpha\in[1,2)\).

For the floor sequences with \(1<\alpha<2\), Hegyv\'{a}ri \cite[Theorem~2.4]{Hegyvari2024} proved that \(A_0+A_0=\mathbb Z_{\ge0}\). The same identity holds for every binary sequence, as the following argument shows.

To prove this, let \(S_n:=\sum_{j=0}^nX_j\).  Since \(X_{n+1}\leq2S_n+1\), an induction on \(n\) shows that the sums \(\sum_{j=0}^n\varepsilon_jX_j\), with \(\varepsilon_j\in\{0,1,2\}\), fill every integer in \([0,2S_n]\). These sums lie in \(A_0+A_0\). Letting \(n\to\infty\) gives
\[
    A_0+A_0=\mathbb Z_{\geq0}.
\]

For integers $n\ge0$ and $r\ge1$, define the word beginning at $n$ and having length $r$ by
\[
    W(n,r):=(u_n,u_{n+1},\ldots,u_{n+r-1})\in\{0,1\}^r.
\]

The pigeonhole principle gives quantitative control of the signed block collisions.

\begin{lemma}\label{lem:signed-block-collision}
Let $r\ge2$ and $t=\left\lceil4r\log_2 r\right\rceil$. Among the disjoint blocks $W(ir,r)$ $(0\le i<t)$, there are coefficients $\sigma_0,\ldots,\sigma_{t-1}\in\{-1,0,1\}$, not all
zero, such that
\begin{equation}\label{eq:block-collision}
    \sum_{i=0}^{t-1}\sigma_i u_{ir+j}=0
    \qquad(0\le j<r).
\end{equation}
\end{lemma}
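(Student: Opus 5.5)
The plan is the standard pigeonhole argument over subsets. For each subset $I\subseteq\{0,\ldots,t-1\}$, consider the vector
\[
v(I):=\Bigl(\sum_{i\in I}u_{ir+j}\Bigr)_{0\le j<r}\in\{0,1,\ldots,t\}^r .
\]
There are $2^t$ subsets. Each coordinate of $v(I)$ lies in $\{0,\ldots,t\}$, so there are at most $(t+1)^r$ possible vectors.

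If $2^t>(t+1)^r$, two distinct subsets $I\neq I'$ satisfy $v(I)=v(I')$. Put $\sigma_i=\mathbf 1_{I}(i)-\mathbf 1_{I'}(i)\in\{-1,0,1\}$. These coefficients are not all zero because $I\ne I'$. Subtracting the two equal vectors coordinatewise gives exactly \eqref{eq:block-collision}.

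It remains to verify $2^t>(t+1)^r$ for $t=\lceil 4r\log_2 r\rceil$ and $r\ge2$. Equivalently, we need $t>r\log_2(t+1)$.

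Since $t\le 4r\log_2 r+1$, we have $t+1\le 4r\log_2 r+2$. For $r\ge 2$ this is at most $r^3$: check small $r$ directly, and for larger $r$ use $4\log_2 r+2\le r^2$. Hence
\[
r\log_2(t+1)\le 3r\log_2 r<4r\log_2 r\le t .
\]
The small cases are $r=2$, where $t=8$ and $2^8=256>9^2=81$, and $r=3$, where $t=\lceil 12\log_2 3\rceil=20$ and $2^{20}>21^3$. These confirm the bound $t+1\le r^3$ where it is tight, and the general inequality $4\log_2 r+2\le r^2$ holds for all $r\ge3$ (at $r=3$: $8.34\le 9$).

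The only possible obstacle is this elementary inequality, and it is routine. The counting itself needs no property of $u$, so the lemma holds for every binary sequence.
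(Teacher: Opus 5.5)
Your proof is correct and is essentially the paper's argument: pigeonhole over the $2^t$ subset sums, which take values in $\{0,\dots,t\}^r$, with the difference of two colliding subsets giving the $\sigma_i$; the only difference is that the paper uses $t+1<r^4$ where you use $t+1\le r^3$. One small slip: $t+1\le r^3$ actually fails at $r=2$ (there $t+1=9>8$), but your direct check $2^8>9^2$ covers that case, so the argument is unaffected.
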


\begin{proof}
Consider the $2^t$ sums
\[
\sum_{i\in I}W(ir,r),\qquad I\subset\{0,1,\ldots,t-1\},
\]
as vectors in $\Z^r$.  Every coordinate lies in $\{0,1,\ldots,t\}$, so there are at most $(t+1)^r$ possible values.

For $r=2,3$, the inequality $t+1<r^4$ is immediate, and for $r\ge4$ it follows from $t+1\le4r\log_2r+2<r^4$.  Hence $r\log_2(t+1)<4r\log_2r\le t$, and therefore $(t+1)^r<2^t$.  By the pigeonhole principle, there are two distinct subsets sharing the same vector sum. After cancelling their intersection, their difference gives coefficients in $\{-1,0,1\}$ satisfying \eqref{eq:block-collision}.
\end{proof}

\begin{proposition}\label{prop:lower-scale}
Let $r\ge2$ and $t=\lceil4r\log_2r\rceil$. Then $A$ contains an arithmetic progression of length $2^r$, all of whose terms are less than
$2^{rt+2}$.
\end{proposition}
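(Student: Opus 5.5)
The plan is to turn the signed block collision of Lemma~\ref{lem:signed-block-collision} into an exact doubling relation among differences of generator sums. Then independent binary switches produce every multiple $kd$ with $0\le k<2^r$ of a fixed difference $d$.

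First, apply Lemma~\ref{lem:signed-block-collision} to get $\sigma_0,\dots,\sigma_{t-1}\in\{-1,0,1\}$, not all zero, satisfying \eqref{eq:block-collision}. Put $P=\{i:\sigma_i=1\}$ and $M=\{i:\sigma_i=-1\}$; these are disjoint and not both empty. For $0\le j<r$ define
\[
D_j:=\sum_{i\in P}X_{ir+j}-\sum_{i\in M}X_{ir+j}=\sum_{i=0}^{t-1}\sigma_iX_{ir+j}.
\]
Since $j+1\le r$ and $X_{ir+j+1}=2X_{ir+j}+u_{ir+j}$, summing with the weights $\sigma_i$ and using \eqref{eq:block-collision} gives $D_{j+1}=2D_j$ for $0\le j<r-1$. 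Hence $D_j=2^jD_0$.

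Next, $D_0\neq0$. The sets $\{X_{ir}:i\in P\}$ and $\{X_{ir}:i\in M\}$ are distinct subsets of the generators, so uniqueness of representations in Lemma~\ref{lem:superincreasing} forbids equal sums. After swapping $P$ and $M$ if necessary, we may assume $d:=D_0\ge1$.

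Now build the progression. For $J\subseteq\{0,\dots,r-1\}$, set
\[
s(J):=X_{rt}+\sum_{j\in J}\sum_{i\in P}X_{ir+j}+\sum_{j\notin J}\sum_{i\in M}X_{ir+j}.
\]
All indices $ir+j$ with $0\le i<t$ and $0\le j<r$ are distinct and smaller than $rt$. So $s(J)$ is a sum over a set of distinct generators, and it is nonempty because of the term $X_{rt}$; therefore $s(J)\in A$. The extra term $X_{rt}$ is there only to avoid the empty sum when $M=\emptyset$ and $J=\emptyset$.

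By construction, $s(J)=s(\emptyset)+\sum_{j\in J}2^jd$. As $J$ ranges over all subsets of $\{0,\dots,r-1\}$, the quantity $\sum_{j\in J}2^j$ takes every value in $\{0,\dots,2^r-1\}$. Thus the numbers $s(\emptyset)+kd$ with $0\le k<2^r$ form a progression of length $2^r$ in $A$.

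For the size bound, every $s(J)$ is at most $\sum_{n=0}^{rt}X_n$. By Lemma~\ref{lem:superincreasing} this is less than $X_{rt+1}$, and $X_{rt+1}<2^{rt+2}$ by $X_n<2^{n+1}$.

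The only step needing care is the doubling relation. It holds because the block length $r$ matches the number of switch positions, so every index $ir+j+1$ with $j<r-1$ stays inside its own block, and the boundary behaviour at $j=r-1$ is never used. The rest is routine bookkeeping.
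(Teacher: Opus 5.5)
Your proof is correct and follows the paper's argument essentially step by step. The signed block collision gives $D_j=2^jD_0$, uniqueness of subset-sum representations gives $D_0\neq0$, and switching between the $M$- and $P$-blocks, with the extra term $X_{rt}$ to avoid the empty sum, produces the progression; the only cosmetic difference is that you bound the terms via the superincreasing property $\sum_{n\le rt}X_n<X_{rt+1}<2^{rt+2}$, whereas the paper sums $X_n\le 2^{n+1}-1$ directly.
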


\begin{proof}
Choose coefficients $\sigma_i$ as in Lemma~\ref{lem:signed-block-collision}
and define
\[
    D_j:=\sum_{i=0}^{t-1}\sigma_iX_{ir+j}
    \qquad(0\le j\le r).
\]
By \eqref{eq:recurrence} and \eqref{eq:block-collision},
\[
\begin{aligned}
    D_{j+1}
    &=\sum_{i=0}^{t-1}\sigma_i(2X_{ir+j}+u_{ir+j})
      =2D_j
\end{aligned}
\]
for $0\le j<r$.  Thus $D_j=2^jD_0$ for $0\le j\le r$. The coefficients $\sigma_i$ are not all zero, and the indices $ir$ are distinct. Since all finite subset sums of the sequence $(X_n)$ have unique representations by Lemma~\ref{lem:superincreasing}, one sees that $D_0\ne0$. Replacing every $\sigma_i$ by $-\sigma_i$ if necessary, we may assume that $D_0>0$.

Write $I_+:=\{i:\sigma_i=1\}$, $I_-:=\{i:\sigma_i=-1\}$, and put $P_j:=\{ir+j:i\in I_+\}$ and $Q_j:=\{ir+j:i\in I_-\}$ for $0\leq j<r$. All the sets $P_j,Q_j$ are pairwise disjoint, and
\[
\sum_{n\in P_j}X_n-\sum_{n\in Q_j}X_n=D_j=2^jD_0.
\]
Let
\[
S:=X_{rt}+\sum_{j=0}^{r-1}\sum_{n\in Q_j}X_n.
\]
For $0\le m<2^r$, write $m=\sum_{j=0}^{r-1}\eps_j(m)2^j$ with $\eps_j(m)\in\{0,1\}$.  Starting from the representation of $S$, replace $Q_j$ by $P_j$ whenever $\eps_j(m)=1$.  The resulting subset sum is
\[
S +\sum\limits_{j=0}^{r-1}\eps_j(m)\Big(\sum\limits_{n\in P_j}X_n-\sum\limits_{n\in Q_j} X_n\Big)=S +\sum\limits_{j=0}^{r-1}\eps_j(m)\cdot 2^jD_0=S+mD_0.
\]
The fixed term $X_{rt}$ ensures that the subset is nonempty, including when $m=0$.  We have therefore obtained an arithmetic progression of length $2^r$ inside $A$.

Finally, the recurrence gives $X_n\le2^{n+1}-1$ for every $n$.  Every term of the progression uses only indices at most $rt$, and hence is less than $\sum_{n=0}^{rt}2^{n+1}<2^{rt+2}$.
\end{proof}

\begin{proof}[Proof of Theorem~\ref{thm:main}]
Choose $r:=\big\lfloor\frac14 \sqrt{\log_2N/\log_2\log_2N}\big\rfloor$ and $t:=\left\lceil4r\log_2r\right\rceil$. For sufficiently large $N$, one has $r\ge2$ and $rt \le \log_2 N-2$. Proposition~\ref{prop:lower-scale} therefore supplies an arithmetic progression of length $2^r$ in $A\cap[1,N]$.

Moreover, for all sufficiently large $N$, one has $r\ge c_0\sqrt{\log N/\log\log N}$ with an absolute $c_0>0$. Consequently,
\[
L_u(N)\ge2^r \ge\exp\!\left(c\sqrt{\frac{\log N}{\log\log N}}\right)
\]
for an absolute constant $c>0$.
\end{proof}

The block argument also applies to \(X_0=1\), \(X_{n+1}=2X_n+v_n\), where \(v_n\) takes values in a fixed finite set of nonnegative integers. If \(v_n\le M\), the coordinate sums of \(t\) blocks have at most \((Mt+1)^r\) possible values, so \(t=O_M(r\log r)\) suffices. The identity \(X_n-\sum_{i<n}X_i=1+\sum_{j<n}v_j\) still gives uniqueness of subset-sum representations, and \(X_n=O_M(2^n)\). The same lower bound therefore follows, with constants depending on \(M\).

\section{The almost-sure upper bound}
\label{sec:upper}

We now assume that the bits $(u_n)$ are independent and uniformly distributed on $\{0,1\}$. Recalling \eqref{eq_alpha_u}, we write $\alpha=\alpha_u$ for simplicity. The binary expansion in \eqref{eq_alpha_u} shows that $\alpha$ is uniformly distributed on $[1,2]$. Almost surely, $u$ contains infinitely many zeros, and hence $X_n=\lfloor\alpha2^n\rfloor$ for every $n$, by the correspondence established in Section~\ref{sec:lower-bound}.

The proof has a deterministic part, which identifies intervals missing from $A$, and a probabilistic part, which shows that these intervals meet every residue class modulo the common difference of a progression.

\subsection{Increasing enumeration and gaps}

Recall the increasing bijection
$B:\Z_{\geq0}\to A_0$ defined in \eqref{eq:B-definition}.

Let $\nu_2(m)$ denote the exponent of $2$ in a positive integer $m$. When $m-1$ is replaced by $m$, precisely the first $\nu_2(m)$ binary digits change from $1$ to $0$, and the next digit changes from $0$ to $1$. Lemma~\ref{lem:superincreasing} therefore gives
\begin{equation}\label{eq:B-gap}
B(m)-B(m-1)=X_{\nu_2(m)} - \sum_{i=0}^{\nu_2(m)-1}X_i=1+U_{\nu_2(m)}, \qquad(m\ge1).
\end{equation}
Consequently, whenever $m\ge1$ and $2^k\mid m$, every integer in
\begin{equation}\label{eq:full-missing-interval}
 [B(m)-U_k,\,B(m)-1]
\end{equation}
is absent from $A$.

The next elementary estimate keeps track of the actual numerical span of a progression, rather than merely the number of its terms.

\begin{lemma}
\label{lem:B-local-upper}
For integers $0\le x<y$,
\begin{equation}\label{eq:B-local-upper}
B(y)-B(x) \le 2(y-x)+\lfloor\log_2y\rfloor.
\end{equation}
In particular, $B(y)\le N$ implies
\[
B(y)-B(x)\le2(y-x)+\log_2N.
\]
\end{lemma}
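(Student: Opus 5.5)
Write $B(y)-B(x)$ as a telescoping sum of consecutive gaps and apply \eqref{eq:B-gap}:
\[
B(y)-B(x)=\sum_{m=x+1}^{y}\bigl(1+U_{\nu_2(m)}\bigr)=(y-x)+\sum_{m=x+1}^{y}U_{\nu_2(m)}.
\]
Since $U_k\le k$, it suffices to prove
\[
\sum_{m=x+1}^{y}\nu_2(m)\le (y-x)+\lfloor\log_2 y\rfloor .
\]

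To bound this sum, count by levels. We have $\sum_{m=x+1}^{y}\nu_2(m)=\sum_{k\ge1}\#\{m\in(x,y]:2^k\mid m\}$. Only levels with $2^k\le y$ contribute, so $k\le K:=\lfloor\log_2 y\rfloor$. For each such $k$, the number of multiples of $2^k$ in the half-open interval $(x,y]$ is $\lfloor y/2^k\rfloor-\lfloor x/2^k\rfloor\le (y-x)/2^k+1$. Summing over $1\le k\le K$ gives at most $(y-x)\sum_{k\ge1}2^{-k}+K<(y-x)+K$. This yields \eqref{eq:B-local-upper}.

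Alternatively, use the identity $\sum_{m=1}^{n}\nu_2(m)=n-s_2(n)$, where $s_2$ is the binary digit sum. The sum equals $(y-s_2(y))-(x-s_2(x))\le (y-x)+s_2(x)-1$. One then still needs to compare with $\log_2 y$, and $s_2(x)\le\lfloor\log_2 x\rfloor+1\le\lfloor\log_2y\rfloor+1$ loses one unit. So the level-counting argument above is the cleaner route.

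For the ``in particular'' clause, note that $y\le B(y)$, because $B$ is strictly increasing with $B(0)=0$. Hence $B(y)\le N$ gives $\lfloor\log_2y\rfloor\le\log_2N$; the case $y=0$ is impossible since $x<y$.

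The only step needing care is the boundary count of multiples per level. Using $\lfloor y/2^k\rfloor-\lfloor x/2^k\rfloor\le (y-x)/2^k+1$ only for $k\le\lfloor\log_2y\rfloor$ is what keeps the additive error at $\lfloor\log_2 y\rfloor$.
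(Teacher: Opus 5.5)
Your proof is correct and matches the paper's argument: telescoping via \eqref{eq:B-gap}, $U_k\le k$, then counting multiples of $2^k$ in $(x,y]$ only for $k\le\lfloor\log_2 y\rfloor$, and finally $y\le B(y)$ for the last clause (the paper gets this from $X_i\ge 2^i$, you from strict monotonicity of $B$; both are fine). One correction to your aside: the route via $\sum_{m\le n}\nu_2(m)=n-s_2(n)$ does not lose a unit, because $s_2(x)-1\le\lfloor\log_2 x\rfloor\le\lfloor\log_2 y\rfloor$ for $x\ge1$ (and the case $x=0$ is even better), so it also gives \eqref{eq:B-local-upper} exactly.
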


\begin{proof}
Summing \eqref{eq:B-gap} and using $U_j\le j$ gives
\begin{align*}
B(y)-B(x) =\sum_{m=x+1}^{y}\bigl(1+U_{\nu_2(m)}\bigr) \le (y-x)+\sum_{m=x+1}^{y}\nu_2(m).
\end{align*}
Put $h=y-x$.  Counting the multiples of every power of two in the interval $(x,y]$, we obtain
\begin{align*}
\sum_{m=x+1}^{y}\nu_2(m)=\sum_{j\ge1} \left(\left\lfloor\frac{y}{2^j}\right\rfloor-\left\lfloor\frac{x}{2^j}\right\rfloor\right)
\le \sum_{1\le j\le\lfloor\log_2y\rfloor} \left(\frac{h}{2^j}+1\right)  \le h+\lfloor\log_2y\rfloor.
\end{align*}
This proves \eqref{eq:B-local-upper}. Finally, the recurrence \eqref{eq:recurrence} gives $X_i\ge2^i$, and hence $B(m)\ge m$ for all $m\ge0$. Thus $B(y)\le N$ implies $y\le N$, proving the last assertion.
\end{proof}

We shall also use the elementary lower bound
\begin{equation}\label{eq:B-local-lower}
 B(y)-B(x)\ge y-x\qquad(0\le x<y),
\end{equation}
which follows from the strict monotonicity of the integer-valued map $B$.

With $\alpha=\alpha_u$, we further define
\[
 \delta_n:=\alpha2^n-X_n,\qquad  T_{k,t}:=\sum_{h=0}^{t-1}\delta_{k+h}\quad(k,t\ge1).
\]
As observed above, $0\le\delta_n\le1$. The recurrence for $X_n$ gives
\[
 \delta_{n+1}=2\delta_n-u_n.
\]
Summing this identity yields
\begin{equation}\label{eq:tail-sum-identity}
 T_{k,t}=U_{k+t}-U_k+\delta_{k+t}-\delta_k.
\end{equation}
This identity will allow us to compare the lengths of the missing intervals with the errors in approximating their locations by a rotation.

\subsection{Rotation covering and repeated doubling}

For $x\in\mathbb R$, write $\|x\|_{\mathbb T}:=\min_{m\in\Z}|x-m|$. For $d>0$, the distance between two residue classes of $x,y\in \mathbb R$ in $\mathbb R/d\mathbb Z$ is defined to be
\[
\min_{m\in \mathbb Z}|x-y-md|= d\left\|\frac{x-y}{d}\right\|_{\mathbb T}.
\]
We say that a finite set of points on $\mathbb R/d\Z$ has covering radius at most $r$ if every point of $\mathbb R/d\Z$ is at distance at most $r$ from that set.

\begin{lemma}\label{lem:rotation-covering}
Let $d,R$ be positive integers with $R\ge8d$. Suppose that $\theta\in\mathbb R$ satisfies
\[
 \|q\theta\|_{\mathbb T}>\frac1R
 \qquad(1\le q\le4d).
\]
Then the set of points $cd\theta\pmod d$ $(1\le c\le R)$ has covering radius at most $1/4$ on $\mathbb R/d\Z$.
\end{lemma}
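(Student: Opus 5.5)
The plan is to rescale the statement to the unit circle and then use a single Dirichlet approximation. Dividing by $d$ identifies $\mathbb R/d\Z$ with $\mathbb R/\Z$ and sends the point $cd\theta \bmod d$ to $c\theta \bmod 1$. Distances shrink by the factor $d$. So the claim is equivalent to showing that the points $c\theta \bmod 1$ with $1\le c\le R$ have covering radius at most $1/(4d)$ on $\mathbb R/\Z$.

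\textbf{Step 1 (Dirichlet approximation).} Apply Dirichlet's theorem with parameter $Q=R$. This gives integers $1\le q\le R$ and $p$ with $\gcd(p,q)=1$ and $|q\theta-p|\le 1/(R+1)<1/R$. Then $\|q\theta\|_{\mathbb T}<1/R$, so the hypothesis $\|q\theta\|_{\mathbb T}>1/R$ for $1\le q\le 4d$ forces $q>4d$. Write $\theta=p/q+\eta$ with $|\eta|<1/(qR)$.

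\textbf{Step 2 (the first $q$ multiples are a perturbed grid).} Since $\gcd(p,q)=1$, as $c$ runs over $1,\dots,q$ the residues $cp/q \bmod 1$ run over every point $j/q$ of the grid $\tfrac1q\Z/\Z$. These $c$ are admissible because $q\le R$. For such $c$ we have $c\theta=cp/q+c\eta$, and $|c\eta|\le q|\eta|<1/R$.

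\textbf{Step 3 (covering).} Take any $x\in\mathbb R/\Z$. Some grid point $j/q$ lies within $1/(2q)$ of $x$, and $j/q\equiv cp/q$ for some $1\le c\le q$. Then
\[
\|x-c\theta\|_{\mathbb T}<\frac1{2q}+\frac1R<\frac1{8d}+\frac1{8d}=\frac1{4d},
\]
using $q>4d$ for the first term and $R\ge 8d$ for the second. Scaling back by $d$ gives covering radius at most $1/4$ on $\mathbb R/d\Z$.

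There is no real obstacle here. The only point needing care is the choice $Q=R$ in Dirichlet's theorem, rather than something like $R/2$. That choice does two things at once: the resulting denominator $q$ still indexes admissible multiples $c\le q\le R$, and the approximation error $\|q\theta\|_{\mathbb T}<1/R$ is strictly below the threshold in the hypothesis, which is what excludes $q\le 4d$.
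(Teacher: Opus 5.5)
Your proof is correct and follows essentially the same route as the paper: you apply Dirichlet's theorem with parameter $R$, use the hypothesis to force $q>4d$, and cover the circle by the perturbed grid of the first $q$ multiples with error $1/(2q)+1/R$. The only differences are cosmetic: you rescale to $\mathbb R/\Z$ first, and you use the sharper Dirichlet bound $1/(R+1)$, which is harmless but unnecessary because the hypothesis $\|q\theta\|_{\mathbb T}>1/R$ is already strict.
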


\begin{proof}
By Dirichlet's approximation theorem, there exist coprime integers $p,q$, with $1\le q\le R$, such that $|\theta-p/q|\le 1/(qR)$. Since $\|q\theta\|_{\mathbb T}\le |q\theta-p|\le1/R$, the hypothesis implies $q>4d$.

As $\gcd(p,q)=1$, the points $cdp/q\pmod d$ with $1\le c\le q$ form $q$ equally spaced points on $\mathbb R/d\mathbb Z$,
with covering radius $d/(2q)$. Moreover, for $1\le c\le q$,
\[
 \left|cd\theta-\frac{cdp}{q}\right|
 \le \frac{cd}{qR}\le\frac dR.
\]
Thus every point of $\mathbb R/d\Z$ lies within distance
\[
 \frac d{2q}+\frac dR
 \le\frac18+\frac18=\frac14
\]
of some $cd\theta\pmod d$ with $1\le c\le q$. Since $q\le R$, the conclusion follows.
\end{proof}

For positive integers $d,R$ and $k\ge0$, define
\[
 \mathcal R_{d,R}(k):=
 \{c\alpha2^k\pmod d:1\le c\le R\}.
\]
The set $\mathcal R_{d,R}(k+1)$ is obtained from $\mathcal R_{d,R}(k)$ by doubling each point modulo $d$. The following estimate uses this relation to bound the probability that the covering radius exceeds $1/4$ for every $k$ in a given interval of consecutive integers.

\begin{lemma}\label{lem:consecutive-rotation-levels}
Let $d,R,K$ be positive integers. Suppose that $R>12d$ and $2^K\ge 4d$. For every integer $k_0\ge0$ with $2^{k_0}\ge d$, let $\mathcal B$ be the event that $\mathcal R_{d,R}(k_0+j)$ has covering radius greater than $1/4$ for every $0\le j\le K$. Then
\begin{equation}\label{eq:consecutive-failure-probability}
 \Prob(\mathcal B)
 \le\frac{16d(3+\lfloor\log_2d\rfloor)}{R2^K}.
\end{equation}
\end{lemma}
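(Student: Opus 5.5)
Write $\theta_k:=\alpha2^k/d$, so that $\mathcal R_{d,R}(k)=\{cd\theta_k \bmod d:1\le c\le R\}$ and $\theta_{k+1}=2\theta_k$. The plan is to turn failure of covering at each level into a rational approximation via Lemma~\ref{lem:rotation-covering}, and to show that these approximations are forced to be consecutive doublings of a single fraction. Summing over the $K+1$ levels then pins $\theta_{k_0}$ to within about $2^{-K}/R$ of a rational with small denominator. The final step is a measure estimate for $\alpha$, which is uniform on $[1,2]$.

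\emph{Step 1 (one level).} Since $R>12d\ge8d$, Lemma~\ref{lem:rotation-covering} applies at every level $k=k_0+j$. On $\mathcal B$ it therefore gives some $q\le4d$ with $\|q\theta_k\|_{\mathbb T}\le1/R$. Combining this with Dirichlet's theorem as in the proof of that lemma, I expect to obtain a reduced fraction $p_j/q_j$ with $q_j\le4d$ and
\[
\bigl|2^j\theta_{k_0}-p_j/q_j\bigr|\le \frac1{q_jR}.
\]
If the exact form with the factor $1/q_j$ is awkward, I will settle for an error $\le1/R$ and carry the weaker bound through.

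\emph{Step 2 (rigidity under doubling).} I compare $2p_j/q_j$ with $p_{j+1}/q_{j+1}$ modulo $1$. Both are within $2/(q_jR)+1/(q_{j+1}R)$ of $2^{j+1}\theta_{k_0}$ modulo $1$. If they were distinct modulo $1$, their distance would be at least $1/(q_jq_{j+1})$. Multiplying through, this would require $2q_{j+1}+q_j\ge R$, which is impossible because $2q_{j+1}+q_j\le12d<R$. This is where $R>12d$ enters. Hence $p_{j+1}/q_{j+1}\equiv 2p_j/q_j \pmod 1$. Inductively, every level approximates $2^j\theta_{k_0}$ by $2^j p_0/q_0$ modulo $1$.

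Next I lift this to the reals. Set $\eta_j:=2^j(\theta_{k_0}-p_0/q_0)$. Each $\eta_j$ is within $1/R$ of an integer, and $\eta_{j+1}=2\eta_j$. By induction $|\eta_j|\le1/R$: if $|\eta_j|\le1/R<1/4$, then $|2\eta_j|<1/2$, so the nearest integer to $\eta_{j+1}$ must be $0$. At $j=K$ this gives
\[
|\theta_{k_0}-p_0/q_0|\le \frac{1}{R2^K},
\]
and with more care $\le 1/(q_0R2^K)$.

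\emph{Step 3 (measure).} On $\mathcal B$ we have $\|q_0\theta_{k_0}\|_{\mathbb T}\le 1/(R2^K)$ for some $q_0\le4d$, possibly with an extra factor $q_0$ if only the weaker bound is available. For fixed $q$, the map $\alpha\mapsto q\alpha2^{k_0}/d$ has slope $q2^{k_0}/d\ge1$, using $2^{k_0}\ge d$. So the image of $[1,2]$ wraps around $\mathbb T$ at least once, and the set of $\alpha\in[1,2]$ with $\|q\theta_{k_0}\|_{\mathbb T}\le\varepsilon$ has measure at most about $4\varepsilon$ after accounting for boundary pieces. Summing over $q\le4d$ gives a bound of order $d/(R2^K)$.

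The main obstacle is the bookkeeping that produces exactly $16d(3+\lfloor\log_2 d\rfloor)/(R2^K)$ rather than a cruder $O(d^2/(R2^K))$. I would try first to exploit the $1/q_0$ in the error, which makes the sum over $q$ harmonic. The $\log_2 d$ factor then appears either from $\sum_{q\le4d}1/q$ weighted by the $q/d$ boundary effect, or from splitting by the $2$-adic valuation of $q_0$: doubling strips powers of $2$ from the denominator, and $2^K\ge4d$ guarantees the chain reaches the odd part. Separately, I must check that the hypothesis $2^K\ge4d$ is used legitimately to keep $1/(R2^K)$ below the Dirichlet scale, so that the lifting in Step 2 is valid.
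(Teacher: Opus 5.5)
Your Steps 1 and 2 are correct and follow the paper's proof. The paper compares $p_{j+1}/q_{j+1}$ with $2p_j/q_j$ directly as real numbers, since the $p_j$ come from genuine approximations $|q_j2^j\theta_{k_0}-p_j|\le 1/R$. This gives $p_j/q_j=2^jp_0/q_0$ exactly, so your mod-$1$ comparison and lifting are unnecessary, though harmless.

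The gap is at the end of Step 2 and in Step 3: the sharpening $|\theta_{k_0}-p_0/q_0|\le 1/(q_0R2^K)$ is false. Level $j$ only gives $|\theta_{k_0}-p_0/q_0|\le 1/(q_jR2^j)$, where $q_j$ is the reduced denominator of $2^jp_0/q_0$, and doubling strips the factors of $2$ from $q_j$. For $q_0=2$, every level $j\ge1$ has $q_j=1$. Indeed, take $d\ge2$ and $\theta_{k_0}\equiv\frac12+\frac{\lambda}{R2^K}\pmod 1$ with $\frac12<\lambda<1-\frac1{2d}$. At level $0$ the $R$ points lie within $1/4$ of $0$ or $d/2$. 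At each level $j\ge1$ they lie in the arc $(0,\lambda d]$, whose length is less than $d-\frac12$. So this $\theta_{k_0}$ lies in $\mathcal B$. Yet it is at distance $\lambda/(R2^K)>1/(2R2^K)$ from $\frac12$, and far from every other fraction with denominator at most $4d$. With the bound you actually proved, $|\theta_{k_0}-p_0/q_0|\le1/(R2^K)$, Step 3 yields only $O(d^2/(R2^K))$. That is weaker than the lemma and would not be summable in the proof of Theorem~\ref{thm:main2}.

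The missing ingredient is the one you mention only as an alternative, and it is exactly what the paper does. Write $q_0=2^vb$ with $b$ odd. Then $q_j=q_0/2^{\min(v,j)}$. The hypothesis $2^K\ge4d\ge q_0$ guarantees $K\ge v$, so $q_K=b$. Level $K$ then gives $|\theta_{k_0}-p_0/q_0|\le1/(bR2^K)$, that is, $\|q_0\theta_{k_0}\|_{\mathbb T}\le2^v/(R2^K)$. This is the real role of $2^K\ge4d$; your lifting needs only $R>3$.

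Your density estimate (measure at most $4\varepsilon$ for each fixed $q_0$) now gives at most $4\cdot2^v/(R2^K)$ per $q_0$. For fixed $v$ there are at most $4d/2^v$ admissible odd $b$, contributing at most $16d/(R2^K)$. The exponent $v$ takes at most $\lfloor\log_2(4d)\rfloor+1=3+\lfloor\log_2d\rfloor$ values. This yields exactly the stated bound: the logarithm counts the possible $2$-adic valuations of $q_0$, and no harmonic sum is involved.
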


\begin{proof}
Set $\theta=\alpha2^{k_0}/d$. Suppose first that $\mathcal B$ occurs. Since $R>12d$, Lemma~\ref{lem:rotation-covering} applies. Its contrapositive shows that, for each $0\le j\le K$, there are integers $p_j$ and $1\le q_j\le 4d$ such that
\[
 |q_j2^j\theta-p_j|\le\frac1R.
\]
After reducing the fraction, we may assume that $p_j$ and $q_j$ are relatively prime and
\begin{equation}\label{eq:consecutive-rational-approximation}
 \left|2^j\theta-\frac{p_j}{q_j}\right|
 \le\frac1{Rq_j}.
\end{equation}
For $0\le j<K$, this gives
\[
\left|\frac{p_{j+1}}{q_{j+1}}-\frac{2p_j}{q_j}\right| \leq \left|\frac{p_{j+1}}{q_{j+1}}-2^{j+1}\theta\right|+2\left|2^j\theta-\frac{p_j}{q_j}\right|\leq  \frac{1}{Rq_{j+1}}+\frac{2}{Rq_j} \leq \frac{12d}{Rq_jq_{j+1}}  <\frac1{q_jq_{j+1}}.
\]
The difference of two distinct fractions with denominators $q_j$ and $q_{j+1}$ has absolute value at least $1/(q_jq_{j+1})$. Therefore, $p_{j+1}/q_{j+1}=2p_j/q_j$ for $0\le j<K$. Consequently, by induction,
\begin{equation}\label{eq:rational-doubling-identity}
\frac{p_j}{q_j}=2^j\frac{p_0}{q_0} \qquad(0\le j\le K).
\end{equation}
These are equalities between real rational numbers.

Write $q_0=2^v b$, where $b$ is odd. Since every fraction is in lowest terms, \eqref{eq:rational-doubling-identity} implies
\[
 q_j=\frac{q_0}{2^{\min(v,j)}}.
\]
The assumption $2^K\ge 4d\ge q_0$ gives $K\ge v$, so $q_K=b$. Combining \eqref{eq:consecutive-rational-approximation} at $j=K$ with \eqref{eq:rational-doubling-identity}, we obtain
\begin{equation}\label{eq:persistent-approximation}
 \left|\theta-\frac{p_0}{q_0}\right|
 \le\frac1{R2^K b}.
\end{equation}

We now estimate the uniform measure, modulo one, of the possible values of $\theta$. For a fixed denominator $q_0=2^v b\le 4d$, there are at most $q_0$ reduced fractions modulo one. The intervals in \eqref{eq:persistent-approximation} around these fractions have total length at most $2q_0/(R2^K b)$. Thus the uniform measure of all possible values is at most
\begin{align*}
\sum_{\substack{2^v b\le 4d\\b\text{ odd}}}
 \frac{2\cdot2^v b}{R2^K b}
 \le\frac2{R2^K}
 \sum_{v=0}^{\lfloor\log_2 (4d)\rfloor}
 2^v\frac{4d}{2^v}
\le\frac{8d(3+\lfloor\log_2 d\rfloor)}{R2^K}.
\end{align*}
Overlaps between these intervals only decrease the measure.

Finally, $\alpha$ is uniformly distributed on $[1,2]$. Hence $\theta=\alpha2^{k_0}/d$ is uniform on an interval of length $2^{k_0}/d$. Its reduction modulo one has density at most $1+d/2^{k_0}\le2$ with respect to uniform measure. Multiplying the preceding measure estimate by two proves \eqref{eq:consecutive-failure-probability}.
\end{proof}

\subsection{Using the full missing intervals}

The next proposition converts rotation covering into a bound for the numerical span of an arithmetic progression.

\begin{proposition}\label{prop:gap-lemma}
Let $d,k,t\ge1$ be integers. Suppose that $U_k-T_{k,t}\ge2$ and that $\mathcal R_{d,2^t-1}(k)$ has covering radius at most $1/4$. If an arithmetic progression of common difference $d$ and length $\ell$ is contained in $A\cap[1,N]$, where $N\ge2$, then
\begin{equation}\label{eq:scale-sensitive-gap-bound}
 \ell\le1+\frac{4\cdot2^{k+t}+\lfloor\log_2(3N)\rfloor}{d}.
\end{equation}
\end{proposition}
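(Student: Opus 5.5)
The plan is to show that, under the two hypotheses, every window of the form $[B(M),B(M+2^{k+t})]$, with $M$ a multiple of $2^{k+t}$, contains an integer missing from $A$ in every prescribed residue class modulo $d$. A progression of difference $d$ inside $A$ therefore cannot contain such a window. Lemma~\ref{lem:B-local-upper} bounds the length of these windows, which gives \eqref{eq:scale-sensitive-gap-bound}.

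\emph{Step 1: locating the missing intervals.} Fix $M\ge0$ divisible by $2^{k+t}$ and $1\le c\le 2^t-1$, and put $m=M+c2^k$. The binary digits of $M$ and of $c2^k$ occupy disjoint positions, so
\[
B(m)=B(M)+\sum_i\eps_i(c)X_{k+i}=B(M)+c\alpha2^k-E,
\qquad
E=\sum_i\eps_i(c)\delta_{k+i}\in[0,T_{k,t}],
\]
using $X_n=\alpha2^n-\delta_n$ and $0\le\delta_n\le 1$. Since $2^k\mid m$, \eqref{eq:full-missing-interval} shows that every integer in $[B(m)-U_k,B(m)-1]$ is absent from $A$. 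This integer interval contains all integers of the real interval
\[
J_c:=\bigl[B(M)+c\alpha2^k-U_k,\;B(M)+c\alpha2^k-T_{k,t}-1\bigr].
\]
The interval $J_c$ has length $U_k-T_{k,t}-1\ge1$ by hypothesis, and it is centred at $B(M)+c\alpha2^k-(U_k+T_{k,t}+1)/2$.

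\emph{Step 2: hitting the residue class.} Let the progression be $a,a+d,\dots,a+(\ell-1)d$. Consider the target point $x:=a-B(M)+(U_k+T_{k,t}+1)/2$ in $\mathbb R/d\Z$. By the covering hypothesis on $\mathcal R_{d,2^t-1}(k)$, there are $c$ and an integer $j$ with
\[
c\alpha2^k=a-B(M)+jd+\tfrac{U_k+T_{k,t}+1}{2}+e,\qquad |e|\le\tfrac14 .
\]
Then $n:=a+jd$ lies within $1/4$ of the centre of $J_c$. Since the half-length of $J_c$ is at least $1/2$, we get $n\in J_c$, so $n\equiv a\pmod d$ and $n\notin A$. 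Moreover $n<B(m)\le B(M+2^{k+t})$. Also $n\ge B(m)-U_k>B(m-2^k)\ge B(M)$, because $B(m)-B(m-2^k)\ge X_k-\sum_{i<k}X_i=1+U_k$ by Lemma~\ref{lem:superincreasing}. So $n$ lies in the window $[B(M),B(M+2^{k+t})]$.

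\emph{Step 3: bounding the span.} Let $M$ be the least multiple of $2^{k+t}$ with $B(M)\ge a$. Then $B(M)-a\le B(M)-B(M-2^{k+t})$, or $B(M)=0$ when $M=0$. If the span $[a,a+(\ell-1)d]$ contained $[B(M),B(M+2^{k+t})]$, the integer $n$ from Step 2 would be a missing term of the progression, which is a contradiction. Hence
\[
(\ell-1)d< B(M+2^{k+t})-B(M-2^{k+t})\le 4\cdot2^{k+t}+\lfloor\log_2 y\rfloor,
\qquad y=M+2^{k+t},
\]
by Lemma~\ref{lem:B-local-upper} applied with gap $2\cdot2^{k+t}$.

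\emph{The main obstacle} is the bookkeeping needed to get $\log_2(3N)$ rather than a bound in terms of $y$. For this we need $y\le 3N$. I would take $M$ minimal as above, so that $B(M-2^{k+t})<a\le N$. If the window extends past $N$, the contradiction in Step 3 is not needed: in that case $(\ell-1)d\le N-a$ is already at most the window length. In the remaining case $B(y)$ stays within a bounded multiple of $N$, and since $B(y)\ge y$ this gives $y\le 3N$. Edge cases ($M=0$, or the window exceeding $N$) are handled by the same comparison, and the final bound follows on dividing by $d$.
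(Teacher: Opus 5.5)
Your Steps 1--3 are the paper's argument. Your $M$ is the paper's $s2^{k+t}$ with $s=\lceil m_02^{-k-t}\rceil$, and $M=0$ never occurs, since $a\ge1>B(0)$. The intervals $J_c$ and the covering step are the same, and comparing with $B(M-2^{k+t})$ instead of $B(m_0)$ gives the same $4\cdot2^{k+t}$.

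The gap is in your last paragraph, which you rightly flag but do not resolve. Splitting according to whether the window passes $N$ is the wrong dichotomy. When $B(y)>N$ you still have to bound $N-a$. Comparing it with the window and invoking Lemma~\ref{lem:B-local-upper} just returns $4\cdot2^{k+t}+\lfloor\log_2y\rfloor$. Since $y=M+2^{k+t}\ge2^{k+t+1}$, this is not the stated bound once, say, $2^{k+t}\ge3N$: then $\lfloor\log_2y\rfloor\ge k+t+1>\lfloor\log_2(3N)\rfloor$. So the reduction to $\lfloor\log_2(3N)\rfloor$ is circular in exactly the case it was meant to handle.

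The right dichotomy, used in the paper, is $2^{k+t}$ versus $N$:
\begin{itemize}
\item If $2^{k+t}\ge N$, then $(\ell-1)d\le N-1<2^{k+t}$ and there is nothing to prove.
\item If $2^{k+t}<N$, your own inequality $M-2^{k+t}\le B(M-2^{k+t})<a\le N$ gives $y<N+2^{k+t+1}<3N$, and Step~3 concludes.
\end{itemize}

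Alternatively, you can drop the case split by applying Lemma~\ref{lem:B-local-upper} to the endpoints of the progression rather than of the window. Write $a=B(m_0)$ and $a+(\ell-1)d=B(m_1)\le N$. Steps~2--3 give $M-2^{k+t}<m_0\le m_1<M+2^{k+t}$. Hence, for $\ell\ge2$,
\[
(\ell-1)d\le2(m_1-m_0)+\lfloor\log_2m_1\rfloor<4\cdot2^{k+t}+\lfloor\log_2N\rfloor,
\]
which is slightly stronger than the claimed bound.
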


\begin{proof}
For integers $s\ge0$ and $1\le c<2^t$, write $c=\sum_{h=0}^{t-1}\eps_h(c)2^h$. The binary digits of $s2^{k+t}$ and $c2^k$ occupy disjoint positions, so
\begin{align*}
B((s2^t+c)2^k) =B(s2^{k+t})+\sum_{h=0}^{t-1}\eps_h(c)X_{k+h}=B(s2^{k+t})+\alpha2^k c-E(c),
\end{align*}
where
\[
 E(c):=\sum_{h=0}^{t-1}\eps_h(c)\delta_{k+h}.
\]
By \eqref{eq:full-missing-interval}, every integer in
\[
 [B((s2^t+c)2^k)-U_k,\,B((s2^t+c)2^k)-1]
\]
is absent from $A$. This interval contains the real interval
\begin{equation}\label{eq:common-missing-interval}
 J_c:=[B(s2^{k+t})+\alpha2^k c-U_k,\,B(s2^{k+t})+\alpha2^k c-T_{k,t}-1],
\end{equation}
because $0\le E(c)\le T_{k,t}$.

Each interval $J_c$ has length $U_k-T_{k,t}-1\ge1$. Their centers $z_c$ modulo $d$ form a common translate of
\[
\mathcal R_{d,2^t-1}(k) =  \{\alpha2^kc\pmod d:\, 1\le c\le 2^t-1\}.
\]
%Consequently, for every integer residue class modulo $d$, there is an integer in that class within distance $1/4$ of one of these centers. This integer lies in the corresponding interval $J_c$, and is therefore absent from $A$.

Write the progression as $a,a+d,\ldots,a+(\ell-1)d$, and let $B(m_0)=a$. If $2^{k+t}\ge N$, then $\ell\le N/d+1$ already implies \eqref{eq:scale-sensitive-gap-bound}. We may therefore assume that $2^{k+t}<N$. Set $s=\lceil m_02^{-k-t}\rceil$. Every integer in the intervals $J_c$ lies strictly above $a$, because its containing missing interval starts at
\[
 B((s2^t+c)2^k)-U_k
 \ge B(s2^{k+t})+2^k-U_k>B(m_0).
\]
Here we used \eqref{eq:B-local-lower}, $s2^{k+t}\ge m_0$, and $U_k\le k<2^k$. All these intervals lie strictly below $B((s+1)2^{k+t})$.

By the definition of covering radius, there is some $1\leq c\leq 2^t-1$ such that the distance between the residue classes of $z_c$ and $a$ is at most $1/4$. Then there is some integer $q$ such that $|z_c-(a+qd)|\leq 1/4$. Take $H=a+qd$. One sees that $H\equiv a\pmod d$ and $H\in J_c$. If the last term of the progression were at least $B((s+1)2^{k+t})$, then $H$ would be one of its terms, contradicting $H\notin A$. Hence
\[
 (\ell-1)d\le B((s+1)2^{k+t})-B(m_0).
\]
Since $m_0\le B(m_0)=a\le N$ and $2^{k+t}<N$, we have
\[
 (s+1)2^{k+t}-m_0<2^{k+t+1},
 \qquad (s+1)2^{k+t}<3N.
\]
Lemma~\ref{lem:B-local-upper} now gives \eqref{eq:scale-sensitive-gap-bound}.
\end{proof}

We also need a simple estimate for the finitely many common differences not covered by the probabilistic argument.

\begin{lemma}\label{lem:fixed-difference}
Let $d,k\ge1$ and suppose that $U_k\ge d$. If an arithmetic progression of common difference $d$ and length $\ell$ lies in $A\cap[1,N]$, where $N\ge2$, then
\[
 \ell\le1+\frac{2(2^k+d+1)+\lfloor\log_2(3N)\rfloor}{d}.
\]
In particular, if $U_k\to\infty$, then, for each fixed $d$, every such progression satisfies $\ell\ll_{u,d}1+\log N$.
\end{lemma}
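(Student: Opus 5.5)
We use the missing intervals \eqref{eq:full-missing-interval} at scale $2^k$, as in the proof of Proposition~\ref{prop:gap-lemma}, with one simplification: no covering argument is needed. When $U_k\ge d$, a single missing interval of $U_k$ consecutive integers already meets every residue class modulo $d$.

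\textbf{Setup.} Write the progression as $a,a+d,\ldots,a+(\ell-1)d$ with $a=B(m_0)\le N$. If $2^k\ge N$, the trivial bound $\ell\le 1+N/d$ already gives the claim, so we may assume $2^k<N$. Let $m_1$ be the least multiple of $2^k$ with $m_1>m_0$; then $m_1\ge 1$ and $m_1-m_0\le 2^k$.

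\textbf{Key step.} By \eqref{eq:full-missing-interval}, every integer in $[B(m_1)-U_k,\,B(m_1)-1]$ is absent from $A$. This block has $U_k\ge d$ consecutive integers, so it contains one integer from each residue class modulo $d$, and in particular some $H\equiv a\pmod d$. We must check that $H>a$. Since $B(m_1)-U_k\ge B(m_0)+(m_1-m_0)-U_k$ by \eqref{eq:B-local-lower}, this is not automatic when $m_1-m_0$ is small. If instead $B(m_1)-U_k\le a$, the missing block contains $[a,B(m_1)-1]$; this contradicts $a\in A$ unless $a=B(m_1)-$something outside the block. But $a=B(m_0)<B(m_1)$ and $a\ge B(m_1)-U_k$ would put $a$ inside the block, which is impossible. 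Hence $B(m_1)-U_k>a$, and the whole block lies strictly above $a$. Since $H$ is absent from $A$ and $H\equiv a\pmod d$, the progression must end before $H$, so $(\ell-1)d<B(m_1)-1-a+1$, giving
\[
(\ell-1)d\le B(m_1)-B(m_0).
\]
This is cruder than the stated bound only by the harmless $+d+1$ slack.

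\textbf{Bounding the span.} Apply Lemma~\ref{lem:B-local-upper} with $y=m_1$. We have $m_1\le m_0+2^k<2N<3N$, hence
\[
B(m_1)-B(m_0)\le 2\cdot 2^k+\lfloor\log_2(3N)\rfloor,
\]
which is within the stated bound $2(2^k+d+1)+\lfloor\log_2(3N)\rfloor$. The extra $2(d+1)$ in the statement presumably absorbs an off-by-one choice, such as taking $H$ up to one period beyond the block or using $m_1=(\lceil m_0/2^k\rceil+1)2^k$. If the inequality $B(m_1)-U_k>a$ turns out to be delicate, we would use that variant, which costs at most an extra $2^k$ or $d$ and still fits the stated bound.

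\textbf{The ``in particular'' claim.} If $U_k\to\infty$, then for fixed $d$ we can fix the least $k=k(u,d)$ with $U_k\ge d$; this $k$ is independent of $N$. The bound then reads $\ell\le 1+O_{u,d}(1)+\log_2(3N)/d\ll_{u,d}1+\log N$.

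\textbf{Main obstacle.} The only delicate point is ensuring that the missing block lies beyond the start $a$ of the progression. The argument above shows this follows directly from $a\in A$ and $a<B(m_1)$, so we do not expect real difficulty.
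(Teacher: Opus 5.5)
Your proof is correct and follows essentially the same route as the paper: a single missing interval $[B(m)-U_k,B(m)-1]$ at a multiple $m$ of $2^k$ just beyond $m_0$ meets every residue class modulo $d$, and Lemma~\ref{lem:B-local-upper} then bounds the span. The only difference is how the block is placed above $a$: the paper takes the least multiple $m\ge m_0+d+1$ and uses \eqref{eq:B-local-lower}, whereas you take the nearest multiple $m_1>m_0$ and note that the block lies in the gap strictly between $B(m_1-1)\ge a$ and $B(m_1)$ (equivalently, that $a\in A$ cannot lie in it), which removes the $2(d+1)$ term and makes your hedging about a ``delicate'' step unnecessary.
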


\begin{proof}
If $2^k+d+1\ge N$, the trivial bound $\ell\le N/d+1$ suffices. Assume otherwise, and write the first term as $a=B(m_0)$. Let $m$ be the least multiple of $2^k$ such that $m\ge m_0+d+1$. Then
\[
 d+1\le m-m_0<d+1+2^k.
\]
Because $U_k\ge d$, all the integers in $[B(m)-d,B(m)-1]$ are absent from $A$. By \eqref{eq:B-local-lower}, their smallest member is at least
\[
 B(m_0)+(m-m_0)-d\ge a+1.
\]
These $d$ consecutive missing integers meet every residue class modulo $d$. Therefore the last term of the progression is less than $B(m)$. Since $m_0\le N$ and $d+1+2^k<N$, we have $m<2N$. Applying Lemma~\ref{lem:B-local-upper} to $m_0,m$ proves the displayed bound. For the final assertion, choose any fixed $k$ with $U_k\ge d$.
\end{proof}

\subsection{Choice of scales and proof of Theorem~\ref{thm:main2}}

\begin{proof}[Proof of Theorem~\ref{thm:main2}]
Hoeffding's inequality for the sum of independent fair bits gives
\[
 \Prob\left(\left|U_n-\frac n2\right|>
 \sqrt{n\log(n+2)}\right)\le 2\exp\left(-\frac{2\cdot  n\log (n+2)}{n}\right) = 2(n+2)^{-2}.
\]
Since $\sum_{n\geq 2}2(n+2)^{-2}<\infty$, the Borel--Cantelli lemma therefore shows that, almost surely,
\begin{equation}\label{eq:uniform-bit-count}
 \left|U_n-\frac n2\right|\le\sqrt{n\log(n+2)}
\end{equation}
for every sufficiently large $n$.

For each integer $d\ge3$, choose the deterministic parameters
\[
t=t(d):=\lceil\log_2(64d)\rceil,\quad  h=h(d):=\left\lceil40\sqrt{t\log(t+2)}\right\rceil,
 \quad k_0=k_0(d):=t+h,
\]
\[ K=K(d):=\left\lceil\log_2(4d)+4\log_2\log(d+2)\right\rceil.
\]
We first verify that, almost surely, for every sufficiently large $d$,
\begin{equation}\label{eq:uniform-gap-width}
 U_k-T_{k,t}\ge2
 \qquad(k_0\le k\le k_0+K).
\end{equation}
For all sufficiently large $d$, one has $h\le t$ and $K\le2t$. Thus $k+t\le5t$ throughout this range. On the probability-one event in \eqref{eq:uniform-bit-count},
\begin{align*}
 2U_k-U_{k+t}
 &\ge\frac{k-t}{2}
 -2\sqrt{k\log(k+2)}
 -\sqrt{(k+t)\log(k+t+2)}\\
 &\ge20\sqrt{t\log(t+2)}-3\sqrt{5t\log(5t+2)}\\
 &\ge(20-3\sqrt{10})\sqrt{t\log(t+2)}\ge3
\end{align*}
for every sufficiently large $d$. Here we have used the fact $5t+2\le(t+2)^2$. Identity~\eqref{eq:tail-sum-identity} and the inequality $\delta_k-\delta_{k+t}\ge-1$ give \eqref{eq:uniform-gap-width}.

Note that
\[
 2^{t(d)}-1>12d,\qquad2^{K(d)}\ge 4d,\qquad2^{k_0(d)}\ge d.
\]
Let $\mathcal B_d$ be the event that $\mathcal R_{d,2^{t(d)}-1}(k)$ has covering radius greater than $1/4$ for every $k_0(d)\le k\le k_0(d)+K(d)$. By Lemma~\ref{lem:consecutive-rotation-levels},
\begin{equation}\label{eq:summable-rotation-failures}
 \Prob(\mathcal B_d)
 \ll\frac{d\log(d+2)}{(2^{t(d)}-1)2^{K(d)}}
 \ll\frac1{d(\log(d+2))^3}.
\end{equation}
In the last estimate we used $2^{t(d)}-1\ge64d-1$ and $2^{K(d)}\ge4d(\log(d+2))^4$. The right-hand side of \eqref{eq:summable-rotation-failures} is summable over $d$. Another application of the Borel--Cantelli lemma shows that, almost surely, for every sufficiently large $d$, there exists an integer $k=k(d,u)$ with
\begin{equation}\label{eq:successful-gap-level}
 k_0(d)\le k\le k_0(d)+K(d)
\end{equation}
such that $\mathcal R_{d,2^{t(d)}-1}(k)$ has covering radius at most $1/4$.

Intersect this event with the probability-one event on which \eqref{eq:uniform-bit-count} holds for every sufficiently large $n$. The chosen integer $k(d,u)$ may depend on $u$, but \eqref{eq:uniform-gap-width} holds for every integer $k$ in the indicated range in \eqref{eq:successful-gap-level}. Thus, almost surely, both hypotheses of Proposition~\ref{prop:gap-lemma} hold for the selected $k(d,u),t(d)$ whenever $d$ is sufficiently large.

There is an absolute constant $C_0>0$ such that, for every $d\ge3$ and every $k$ in the range \eqref{eq:successful-gap-level},
\begin{equation}\label{eq:improved-gap-scale}
 2^{k+t(d)}
 \le2^{2t(d)+h(d)+K(d)}
 \le d^3\exp\!\left(C_0\sqrt{\log d\log\log d}\right).
\end{equation}
To see this, note that
\[
 2^{t(d)}\le128d,
 \qquad
 2^{K(d)}\le8d(\log(d+2))^4,
\]
and that $h(d)\le40\sqrt{t(d)\log(t(d)+2)}+1$ with $t(d)=O(\log d)$. These estimates imply \eqref{eq:improved-gap-scale} for all sufficiently large $d$. Increasing $C_0$ handles the remaining values $d\ge3$.

Fix a sequence in the preceding probability-one event, and choose $d_0=d_0(u)\ge3$ such that Proposition~\ref{prop:gap-lemma} applies at the selected values of $k$ for every $d\ge d_0$. Let an arithmetic progression of common difference $d$ and length $\ell$ lie in $A\cap[1,N]$, with $N\ge3$. If $d_0\le d\le N^{1/3}$, then Proposition~\ref{prop:gap-lemma} and \eqref{eq:improved-gap-scale} imply
\begin{align*}
 \ell
 &\le1+4d^2\exp\!\left(C_0\sqrt{\log d\log\log d}\right)
 +\frac{\log_2(3N)}d\\
 &\ll N^{2/3}\exp\!\left(C_0\sqrt{\log N\log\log N}\right).
\end{align*}
If $d>N^{1/3}$, the elementary interval bound gives
\[
 \ell\le\frac Nd+1\le N^{2/3}+1.
\]
Finally, \eqref{eq:uniform-bit-count} implies $U_k\to\infty$. Lemma~\ref{lem:fixed-difference} therefore gives $\ell\ll_u1+\log N$ for the finitely many common differences $1\le d<d_0$, with a single implied constant depending on $u$.

Taking the maximum over all progressions and increasing the constant as necessary, we conclude that
\[
 L_u(N)\le C_u N^{2/3}
 \exp\!\left(C_0\sqrt{\log N\log\log N}\right)
 \qquad(N\ge3).
\]
\end{proof}

\section{Further discussion}
\label{sec:conjecture}
A stronger possible form of Conjecture~\ref{conj:upper-sqrt-log} is that there are absolute constants \(c,C>0\) such that, for independent fair bits, almost surely
\begin{equation}\label{eq:conjectural-two-sided-order}
 \exp\!\bigl(c\sqrt{\log N}\bigr)
 \le L_u(N)\le
 \exp\!\bigl(C\sqrt{\log N}\bigr)
\end{equation}
for all sufficiently large \(N\). The lower inequality is also open: Theorem~\ref{thm:main} falls short of it by a factor of order
\(\sqrt{\log\log N}\) in the exponent.

The upper bound in Conjecture \ref{conj:upper-sqrt-log} cannot hold for every binary sequence. For \(u\equiv0\), one has \(X_n=2^n\), so \(A_u=\mathbb Z_{\ge1}\) and \(L_u(N)=N\). More generally, every eventually periodic sequence has progressions of at least square-root length at all sufficiently large scales. In this case two identical blocks of length \(r\) supply the required switches using only \(2r+O_u(1)\) initial positions. This is the reason for the square-root bound below.

\begin{proposition} \label{prop:constant-one-example}
Assume that \((u_n)\) is eventually periodic: there exist integers \(n_0\ge0\) and \(q\ge1\) such that $u_{n+q}=u_n$ for $n\ge n_0$.
Then $L_u(N) \gg_u N^{1/2}$.
\end{proposition}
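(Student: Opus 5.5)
Mimic Proposition~\ref{prop:lower-scale}, but with a two-block collision instead of the pigeonhole collision of Lemma~\ref{lem:signed-block-collision}. Periodicity gives such a collision for free: for every $n\ge n_0$ and every $r\ge1$, the words $W(n,r)$ and $W(n+q,r)$ coincide.

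\textbf{Step 1 (doubling differences).} Fix $r\ge1$ and put $n_1:=n_0$ and $n_2:=n_0+q'$, where $q'$ is the least multiple of $q$ with $q'\ge r$. Then $n_2\ge n_1+r$, so the index ranges $[n_1,n_1+r)$ and $[n_2,n_2+r)$ are disjoint. Since $u_{n_1+j}=u_{n_2+j}$ for $0\le j<r$, set
\[
D_j:=X_{n_2+j}-X_{n_1+j}\qquad(0\le j\le r).
\]
The recurrence \eqref{eq:recurrence} gives $D_{j+1}=2D_j$, hence $D_j=2^jD_0$. Moreover $D_0>0$ by Lemma~\ref{lem:superincreasing}, since $n_2>n_1$.

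\textbf{Step 2 (the progression).} For $0\le j<r$ let $P_j=\{n_2+j\}$ and $Q_j=\{n_1+j\}$; these sets are pairwise disjoint. Take the base point
\[
S:=X_{n_2+r}+\sum_{j<r}X_{n_1+j}.
\]
For $0\le m<2^r$, switch $Q_j$ to $P_j$ exactly when $\eps_j(m)=1$. This produces the subset sums $S+mD_0$, so $A$ contains an arithmetic progression of length $2^r$. All indices used are at most $n_2+r\le n_0+q+2r$. Since $X_n<2^{n+1}$, every term is less than $2^{n_0+q+2r+2}=C_u4^r$.

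\textbf{Step 3 (choosing $r$).} For $N\ge 4C_u$, choose the largest $r\ge1$ with $C_u4^r\le N$. Then $4^r\gg_u N$, so the progression lies in $A\cap[1,N]$ and $L_u(N)\ge 2^r\gg_u N^{1/2}$. Small $N$ are absorbed into the implied constant, since $L_u(N)\ge1$ once $N\ge1$.

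\textbf{Main obstacle.} The only delicate point is keeping the two blocks disjoint while still using only $2r+O_u(1)$ positions. This is why $n_2$ is taken to be $n_0$ plus the least multiple of $q$ that is at least $r$, rather than $n_0+q$. The rest is the same bookkeeping as in Proposition~\ref{prop:lower-scale}.
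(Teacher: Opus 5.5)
Your proof is correct and follows the paper's argument. Periodicity supplies two identical digit blocks, so the differences $D_j$ double exactly. Switching between the blocks then yields a progression of length $2^r$ using only $2r+O_u(1)$ indices, and choosing $r$ maximal gives $2^r\gg_u N^{1/2}$.

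The differences are cosmetic. The paper restricts $r$ to multiples of $q$ and uses the adjacent blocks $[n_0,n_0+r)$ and $[n_0+r,n_0+2r)$, rather than padding the gap up to a multiple of $q$. It also needs no extra generator $X_{n_2+r}$, since $\sum_{j<r}X_{n_1+j}$ is already a nonempty sum.
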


\begin{proof}
For any large integer $r$ which is a multiple of $q$, consider the two blocks
\[
(u_{n_0},u_{n_0+1},\cdots,u_{n_0+r-1}) \qquad \text{and}\qquad (u_{n_0+r},u_{n_0+r+1},\cdots, u_{n_0+2r-1}).
\]
Because $r$ is a multiple of $q$ and the sequence is periodic after $n_0$, these two blocks are identical. Denote $D_j=X_{n_0+j+r}-X_{n_0+j}$ $(0\leq j<r)$. For $0\le j<r-1$, the recurrence relation and periodicity give
\[
D_{j+1}
=X_{n_0+j+r+1}-X_{n_0+j+1}=(2X_{n_0+r+j}+u_{n_0+r+j})-(2X_{n_0+j}+u_{n_0+j})
=2D_j.
\]
Hence $D_{j}=2^jD_0$ for $0\leq j<r$. Since $D_0>0$, switching between the two corresponding blocks of generators as in the proof of Proposition \ref{prop:lower-scale} gives all subset sums $S+mD_0$ for $0\leq m<2^r$, with $S=\sum_{j=0}^{r-1}X_{n_0+j}$.

Put $K:=n_0+2r$. All indices used are less than $K$, and Lemma~\ref{lem:superincreasing} gives \(\sum_{i=0}^{K-1}X_i<X_K\).  Hence the constructed arithmetic progression of length $2^r$ lies inside $A_u\cap [1,X_K-1]$. Since $X_K<2^{K+1}$, this gives a progression of length $2^r$ below $2^{n_0+2r+1}$ for every sufficiently large multiple $r$ of $q$.

Now let $N$ be sufficiently large, and choose the largest multiple $r$ of $q$ such that $2^{n_0+2r+1}\le N$. The constructed progression lies in $A_u\cap[1,N]$, so $L_u(N)\ge2^r$. By the maximality of $r$, we also have $N<2^{n_0+2(r+q)+1}$, whence
\[
\frac{L_u(N)}{\sqrt N}
\ge\frac{2^r}{\sqrt N}
>2^{-q-(n_0+1)/2} \gg_u 1.
\]
This completes the proof.
\end{proof}

A binary sequence is called \emph{normal} if, for every $h\geq1$ and every word $w\in\{0,1\}^h$, the proportion of starting positions at
which $w$ occurs tends to $2^{-h}$. The following example shows that normality alone does not imply the upper bound in Conjecture~\ref{conj:upper-sqrt-log}.

\begin{proposition}\label{prop:normal-counterexample}
There exist a normal binary sequence $u$, an absolute constant $c>0$, and integers $N_k\to\infty$ such that
\[
 L_u(N_k)\geq
 \exp\!\left(c\frac{\log N_k}{\log\log N_k}\right)
\]
for every sufficiently large $k$.
\end{proposition}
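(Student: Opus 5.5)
We construct $u$ by taking a fixed normal sequence and splicing in, at very sparse positions, pairs of identical blocks. Then we run the switching argument from the proof of Proposition~\ref{prop:constant-one-example} at each splice. Let $c=(c_n)$ be any normal binary sequence, for instance the binary Champernowne sequence. Choose a rapidly increasing sequence of positions, say $n_1$ large and $n_{k+1}\ge 2^{n_k}$. Put $r_k:=\lfloor n_k/\log n_k\rfloor$.

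\textbf{Construction.} Build $u$ by copying $c$, except that immediately after the first $n_k$ symbols of $u$ have been written we insert the word $ww$, where $w$ is any fixed word of length $r_k$ (say $w=0^{r_k}$). Copying of $c$ then resumes. Thus $u_{n_k+j}=u_{n_k+r_k+j}$ for $0\le j<r_k$.

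\textbf{Progressions at each splice.} Exactly as in the proof of Proposition~\ref{prop:constant-one-example}, set $D_j:=X_{n_k+r_k+j}-X_{n_k+j}$. The recurrence and the equality of the two blocks give $D_{j+1}=2D_j$ for $0\le j<r_k-1$, and $D_0>0$ by Lemma~\ref{lem:superincreasing}. Switching independently between $X_{n_k+j}$ and $X_{n_k+r_k+j}$, starting from $S=\sum_{j<r_k}X_{n_k+j}$, yields the progression $S+mD_0$ for $0\le m<2^{r_k}$. All indices involved are less than $K_k:=n_k+2r_k$, so the progression lies in $A_u\cap[1,2^{K_k+1}]$.

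\textbf{Choice of $N_k$.} Take $N_k:=2^{K_k+1}$. Then $\log N_k\asymp n_k$ and $\log\log N_k=\log n_k+O(1)$, so
\[
r_k\ge \frac{n_k}{\log n_k}-1\ge c'\frac{\log N_k}{\log\log N_k}
\]
for large $k$. This gives $L_u(N_k)\ge 2^{r_k}\ge\exp\bigl(c\log N_k/\log\log N_k\bigr)$. Notably, this argument uses nothing about the bits before $n_k$.

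\textbf{Normality (the main step).} Fix $h$ and a word $v\in\{0,1\}^h$, and consider a prefix of $u$ of length $M$. Let $k$ be the largest index with $n_k<M$. The inserted material in this prefix has total length at most
\[
\sum_{j\le k}2r_j\le 2r_k+O(n_{k-1})\le \frac{2n_k}{\log n_k}+O(\log n_k)=o(M),
\]
using the growth condition $n_k\ge 2^{n_{k-1}}$. The number of insertions is $k=o(M)$. Deleting the inserted words turns the prefix into a prefix of $c$ of length $M-o(M)$. Each inserted symbol, and each of the $O(k)$ junctions, can create or destroy at most $h$ occurrences of $v$. Hence the occurrence count of $v$ in $u$ differs from that in the corresponding prefix of $c$ by $o(M)$.

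Normality of $c$ then gives frequency $2^{-h}+o(1)$ for $v$ in $u$. The only care needed is this bookkeeping: the last insertion must remain $o(M)$ even when $M$ is just past $n_k+2r_k$. That is exactly why $r_k=o(n_k)$, while $r_k$ is still of size $n_k/\log n_k$ to produce the $\log N/\log\log N$ exponent.
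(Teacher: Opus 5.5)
Your proof is correct, but it takes a different route from the paper. The paper does not insert anything. It takes a normal sequence $v$ and \emph{overwrites} the bits on the intervals $I_k=\{n_k,\dots,n_k+r_k-1\}$, with $n_k=2^{2^k}$, by zeros.

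\begin{itemize}
\item \textbf{Normality in the paper.} Positions are not shifted, so normality follows from the single observation that only a density-zero set of positions was altered.
\item \textbf{Progression in the paper.} It comes directly from the doubling $X_{n_k+j}=2^jX_{n_k}$. The subset sums of the one block $X_{n_k},\dots,X_{n_k+r_k-1}$ are exactly $X_{n_k},2X_{n_k},\dots,(2^{r_k}-1)X_{n_k}$, and one takes $N_k=2^{r_k}X_{n_k}$. No switching and no appeal to Lemma~\ref{lem:superincreasing} are needed.
\item \textbf{Normality in your proof.} You insert a repeated word $ww$, so you need the shift bookkeeping. You handle it correctly: the inserted symbols and the $O(k)$ junctions change the count of any length-$h$ word by $O\bigl(h(\sum_{j\le k}2r_j+k)\bigr)=o(M)$, and removing the insertions leaves a prefix of $c$ of length $M-o(M)$.
\item \textbf{Progression in your proof.} You reuse the two-block switching argument from Proposition~\ref{prop:constant-one-example}. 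This spends $2r_k$ positions to obtain length $2^{r_k}$ rather than $r_k$ positions. That is harmless, since $r_k=o(n_k)$ still gives $\log N_k=(1+o(1))n_k\log 2$, so any $c<1$ works in both versions.
\end{itemize}

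In exchange, your argument shows that any locally repeated word produces the long progression, not specifically a run of zeros. This ties the example to the same mechanism as the eventually periodic case.

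With your particular choice $w=0^{r_k}$, the inserted word is $0^{2r_k}$. The paper's direct observation would then even give a progression of length $2^{2r_k}-1$.
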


\begin{proof}
Let $v=(v_j)_{j\geq0}$ be a normal binary sequence. For $k\geq2$, put
\[
 n_k:=2^{2^k},\qquad
 r_k:=\left\lfloor\frac{n_k}{\log n_k}\right\rfloor,
 \qquad
 I_k:=\{n_k,n_k+1,\ldots,n_k+r_k-1\}.
\]
Obtain $u$ from $v$ by replacing all bits at positions in $D:=\bigcup_{k\geq2}I_k$ by zero. The intervals $I_k$ are disjoint. If $n_k\leq m<n_{k+1}$ with $k\ge3$, then
\[
\frac{|D\cap\{0,\ldots,m-1\}|}{m}  \leq\frac{\sum_{j=2}^{k}r_j}{n_k}  \leq\frac{k n_{k-1}}{n_k}+\frac1{\log n_k}  \longrightarrow0,
\]
since $n_k=n_{k-1}^2$. Thus $D$ has density zero. For each fixed $h$, an occurrence of a word of length $h$ can change only if one of its
positions belongs to $D$. Among the first $m$ starting positions, there are at most
\[
 h\,|D\cap\{0,\ldots,m+h-1\}|=o(m)
\]
such occurrences. Every finite-word frequency is therefore preserved, and $u$ is normal.

On each interval $I_k$, the recurrence gives
\[
 X_{n_k+j}=2^jX_{n_k}\qquad(0\leq j\leq r_k).
\]
Consequently, the nonempty subset sums of $X_{n_k},\ldots,X_{n_k+r_k-1}$ form the progression
\[
 X_{n_k},\ 2X_{n_k},\ \ldots,\ (2^{r_k}-1)X_{n_k}.
\]
Taking $N_k:=2^{r_k}X_{n_k}$, we obtain $L_u(N_k)\geq2^{r_k}-1$. Since $2^n\leq X_n<2^{n+1}$,
\[
 \log N_k=(n_k+r_k+O(1))\log2  =(1+o(1))n_k\log2,  \qquad  \log\log N_k=\log n_k+O(1).
\]
It follows that
\[
 \log(2^{r_k}-1)  =(1+o(1))\frac{n_k\log2}{\log n_k}  =(1+o(1))\frac{\log N_k}{\log\log N_k}.
\]
Thus any fixed $c\in(0,1)$ gives the asserted lower bound for all sufficiently large $k$.
\end{proof}

For this normal sequence, the inequality $L_u(N)\leq\exp(C\sqrt{\log N})$ fails at arbitrarily large $N$ for every $C>0$. This does not contradict Conjecture~\ref{conj:upper-sqrt-log}, which is an almost-sure statement for independent fair bits. Normality controls the limiting frequencies of fixed finite words and permits the sparse long zero intervals used in the construction.

\section{Acknowledgements}
Norbert Hegyv\'{a}ri was supported by the National Research, Development and Innovation Office NKFIH Grant No K-146387.

\end{document}